\documentclass[12pt]{amsart}
\usepackage{amssymb,amsmath,amsfonts,amsthm,nomencl,mathrsfs} 
\usepackage[arrow, matrix, curve]{xy}
\usepackage[latin1]{inputenc}
\usepackage{a4wide}

\renewcommand{\c}{\mathrm{c}}
\newcommand{\ILL}{\mathscr{L}}

\newcommand{\IL}{\mathsf{L}}

\renewcommand{\dim}{\mathrm{dim}}
\newcommand{\reg}{\mathrm{reg}}

\newcommand{\sing}{\mathrm{sing}}
\newcommand{\depth}{\mathrm{depth}}
\newcommand{\supp}{\mathrm{supp}}
\newcommand{\cod}{\mathrm{cod}}
\newcommand{\ver}{\mathrm{v}}
\renewcommand{\det}{\mathrm{det}}

\newcommand{\loc}{\mathrm{loc}}
\newcommand{\Id}{{\rm d}}

\newcommand{\IR}{\mathbb{R}}
\newcommand{\IT}{T}



\newcommand{\IN}{\mathbb{N}}

\newcommand*{\longhookrightarrow}{\ensuremath{\lhook\joinrel\relbar\joinrel\rightarrow}}

\newcommand{\f}{\frac}
\newcommand{\nn}{\nonumber}

\theoremstyle{plain}            
\newtheorem{theorem}{theorem}[section]

\newtheorem{Corollary}[theorem]{Corollary}
\newtheorem{Theorem}[theorem]{Theorem}
\newtheorem{Proposition}[theorem]{Proposition}

\theoremstyle{definition}       
\newtheorem{Definition}[theorem]{Definition}

\newtheorem{Remark}[theorem]{Remark}

\begin{document}
\begin{titlepage}
\title[$q$-parabolicity  of stratified pseudomanifolds and other singular spaces ]{$q$-parabolicity  of stratified pseudomanifolds and other singular spaces}

\author[F. Bei]{Francesco Bei}
\address{Franceso Bei, Institut f\"ur Mathematik, Humboldt-Universit\"at zu Berlin, 12489 Berlin, Germany} \email{bei@math.hu-berlin.de}

\author[B. G\"uneysu]{Batu G\"uneysu}
\address{Batu G\"uneysu, Institut f\"ur Mathematik, Humboldt-Universit\"at zu Berlin, 12489 Berlin, Germany} \email{gueneysu@math.hu-berlin.de}

\end{titlepage}

\maketitle

\begin{abstract}
The main result of this paper is a sufficient condition in order to have a compact Thom-Mather stratified pseudomanifold endowed with a $\hat{c}$-iterated edge metric on its regular part $q$-parabolic. Moreover, besides stratified pseudomanifolds, the $q$-parabolicity of other classes of singular spaces, such as compact complex Hermitian spaces, is investigated.
\end{abstract}

\section{Introduction}
Given $q\in [1,\infty)$, a smooth Riemannian manifold $(M,g)$ is called $q$-parabolic if the $q$-capacity
\begin{align*}
\mathrm{Cap}_{q,g}(K):=\inf\left \{ \int_M|\Id f|_{g^*}^q\Id\mu_g:\ f\in \mathsf{Lip}_{\c}(M),\ f\geq 1\ \text{on}\ K\right \}
\end{align*}
of each compact $K\subset M$ vanishes, $\mathrm{Cap}_{q,g}(K)=0$. The latter property is easily seen to be equivalent to the existence of a sequence of cut-off functions $\{\psi_n\}\subset  \mathsf{Lip}_{\c}(M)$, such that $0\leq \psi_n\leq 1$ for all $n$, $\left\|\Id \psi_n\right\|_{\IL^q\Omega^1(M,g)}\to 0$ as $n\to\infty$, and
\begin{align}
\nonumber \text{ for each compact $K\subset M$ there exists $n_K\in\IN$ such that $\psi_n\mid_K=1$ for all $n\geq n_K$}.
\end{align}
The importance of this concept stems at least from two reasons: Firstly,  the $2$-parabolicity of $(M,g)$ is equivalent to $g$-Brownian motion being recurrent \cite{grio} (in particular nonexplosive). Secondly, see \cite{troyan},  given a number $1<q<\infty$ and a continuous compactly supported $ 0\not\equiv h:M\to\IR$, the nonlinear $q$-Laplace equation
$$
 \Id^{\dagger_g} \left(|\Id u|_{g^*}^{q-2}\Id u\right)=h 
$$
has a weak solution in the space of $u\in\mathsf{W}^{1,q}_{\loc}(M)$ with $\left\|\Id u\right\|_{\IL^q\Omega^1(M,g)}<\infty$, if and only if $(M,g)$ is \emph{not} $q$-parabolic.\\
The aim of this paper is to investigate the $q$-parabolicity of some classes of \lq{}singular spaces\rq{} with particular regard to the case of compact smoothly Thom-Mather stratified pseudomanifolds. This is an important class of singular spaces whose study, from an analytic point of view, has been initiated by Cheeger in his seminal papers \cite{JCh}, \cite{JEC} and \cite{JC}.\\
In this setting we prove that a compact stratified pseudomanifold $X$ of dimension $m$ whose regular part (which is a smooth manifold) is equipped with an iterated edge metric of type $\hat{c}=(c_2,\dots,c_m)$ is $q$-parabolic (for some $q\in [1,\infty)$), if each singular stratum $Y$ of $X$ satisfies a certain compatibility criterion which only depends on $\hat{c}$, $m$, $q$ and $\dim(Y)$. In the most important case $\hat{c}=(1,\dots,1)$ (which e.g. covers many singular quotients of the form $M/G$ with $M$ a compact manifold and  $G$ a compact Lie group acting isometrically), this  result entails that these spaces are automatically $2$-parabolic and thus  stochastically complete. The importance of this class of metrics, as we will explain more precisely later, lies in its deep connection with the topology of $X$.\\ Besides to stratified pseudomanifolds, in this paper we investigate  also the $q$-parabolicity of other classes of singular  spaces such as compact Hermitian complex spaces, real algebraic varieties and almost complex manifolds endowed with a compatible and degenerate metric whose degeneration locus is a union of closed submanifolds  with codimension $\geq 2$. Let us point out that all of the above examples provide smooth Riemannian manifolds which are \emph{geodesically incomplete}, so that one cannot use Grigoryan\rq{}s well-known parabolicity and stochastic completeness criteria (cf. Theorem 11.8 and Theorem 11.14 in \cite{buch}) which require geodesic completeness and volume control. Our approach is more in the spirit of \cite{masa}.\\
This paper is organized as follows: In the second section we recall the main definitions and some important properties concerning parabolicity of Riemannian manifolds. The third  section, which contains the main result of this paper, deals with compact stratified pseudomanifolds.  The forth section is divided in three parts. The first one contains some technical statements that will be extensively used through the rest of the paper. The second part  deals with almost complex manifolds endowed with a compatible and degenerate metric whose degeneration locus is a union of closed submanifolds  with codimension $\geq 2$. Finally the third part  tackles the case of compact Hermitian complex spaces and real algebraic varieties .\\

{\bf Acknowledgements:} The authors wish to thank the anonymous referee for his valuable comments that in particular led us to the current formulation of Prop. \ref{Open}. The second named author (B.G.) would like to thank Stefano Pigola for many motivating discussions. Both autors have been financially supported by SFB 647: Raum-Zeit-Materie.

\section{Background}

Let $(M,g)$ be a smooth Riemannian manifold and let $\mu_g$ be the Riemannian volume measure. Let  $g^*$ be the smooth metric on $\IT^* M$ induced by $g$ which is given locally by $(g^*_{ij}):=(g_{ij})^{-1}$.  Let us label by $\mathsf{Lip}(M,g)$ the space of Lipschitz functions on $(M,g)$ and let $\mathsf{Lip}_{\c}(M)$ be the space of Lipschitz functions with compact support.  For $q\in [1,\infty]$ let  $\mathsf{W}^{1,q}(M,g)$ be the Sobolev space of functions which are in $\IL^q(M,g)$ and whose distributional differential lies in $\mathsf{L}^q\Omega^{1}(M,h)$. For $1\leq q< \infty$ let $\mathsf{W}^{1,q}_0(M,g)$ be the closure of $\mathsf{C}^{\infty}_{\c}(M)$ in $\mathsf{W}^{1,q}(M,g)$.

\begin{Definition}
Let $D\subset M$ be a relatively compact subset. Then the \emph{$q$-capacity of $D$ w.r.t. $g$}, $1\leq q<\infty$, is defined as 
\begin{equation}
\mathrm{Cap}_{q,g}(D):=\inf\left \{ \int_M|\Id f|_{g^*}^q\Id\mu_g:\ f\in \mathsf{Lip}_{\c}(M),\ f\geq 1\ \text{on}\ D\right \}.
\end{equation}
\end{Definition}

The following equivalence is well known, see for instance \cite{ALA} pag. 47 for the case $q=2$ or \cite{Troya} Prop. 4.1. 

\begin{Proposition}
\label{equivalence}
Let $(M,g)$ be a smooth Riemannian manifold. The following two properties are equivalent:
\begin{itemize}
\item For every compact subset $D\subset M$ we have $$\mathrm{Cap}_{q,g}(D)=0.$$
\item There exists a sequence of functions $\{\phi_n\}\subset \mathsf{C}^1_{\c}(M)$ such that  $0\leq \phi_n\leq 1$, $\phi_n\rightarrow 1$ uniformly on every compact subset as $n\rightarrow \infty$, and $\int_M|\Id\phi|_{g^*}^q\Id\mu_g\rightarrow 0$ as $n\rightarrow \infty$.
\end{itemize}
\end{Proposition}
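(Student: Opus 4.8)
The plan is to prove the two implications separately, with essentially all of the work concentrated in the direction from vanishing capacity to the existence of the sequence; the converse is a one-line rescaling.

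For the implication that vanishing capacity yields such a sequence, I would first fix an exhaustion of $M$ by compact sets $K_1\subset K_2\subset\cdots$ with $K_j\subset \mathrm{int}(K_{j+1})$ and $\bigcup_j K_j=M$. Since $\mathrm{Cap}_{q,g}(K_{j+1})=0$, for each $j$ I can choose $f_j\in\mathsf{Lip}_{\c}(M)$ with $f_j\geq 1$ on $K_{j+1}$ and $\int_M|\Id f_j|_{g^*}^q\Id\mu_g<1/j$. Replacing $f_j$ by $\min(\max(f_j,0),1)$ — which does not increase the energy, because by Rademacher's theorem the a.e.\ differential of the truncation either vanishes or agrees with $\Id f_j$ — I may assume $0\leq f_j\leq 1$ and $f_j\equiv 1$ on the open neighbourhood $\mathrm{int}(K_{j+1})$ of $K_j$, while keeping the energy below $1/j$. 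These $f_j$ are Lipschitz but in general not $\mathsf{C}^1$, so the remaining task is to smooth them without destroying the bounds, the normalization on $K_j$, or the energy control.

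To produce $\mathsf{C}^1$ functions I would use a local mollification argument: covering the compact support of each $f_j$ by finitely many coordinate charts with a subordinate smooth partition of unity, I mollify each localized piece with the standard Euclidean mollifier and sum. Since $g$ is comparable to the Euclidean metric on each relatively compact chart, for a sufficiently small mollification radius I obtain a smooth, compactly supported $\psi_j$ with $\|\psi_j-f_j\|_\infty<1/j$ and $\int_M|\Id\psi_j|_{g^*}^q\Id\mu_g<2/j$. Such a $\psi_j$ need no longer take values in $[0,1]$ nor be exactly $1$ on $K_j$, so I would compose with a fixed smooth non-decreasing $\chi:\IR\to[0,1]$ satisfying $\chi\equiv 0$ on $(-\infty,1/3]$ and $\chi\equiv 1$ on $[2/3,\infty)$, and set $\phi_j:=\chi\circ\psi_j$. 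Then $\phi_j\in\mathsf{C}^1_{\c}(M)$ (indeed smooth, and supported inside $\mathrm{supp}\,\psi_j$ since $\chi(0)=0$), $0\leq\phi_j\leq 1$, and $|\Id\phi_j|_{g^*}\leq\|\chi'\|_\infty|\Id\psi_j|_{g^*}$, whence $\int_M|\Id\phi_j|_{g^*}^q\Id\mu_g\leq\|\chi'\|_\infty^q\int_M|\Id\psi_j|_{g^*}^q\Id\mu_g\to 0$. Moreover, any fixed compact $D$ satisfies $D\subset K_j$ and hence $f_j\equiv 1$ on $D$ for all large $j$, so $\psi_j>2/3$ and therefore $\phi_j\equiv 1$ on $D$ for such $j$; this gives $\phi_j\to 1$ uniformly on every compact subset.

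The converse is elementary: given $\{\phi_n\}$ as in the second property, fix a compact $D$ and $\varepsilon\in(0,1)$. By uniform convergence there is $n$ with $\phi_n\geq 1-\varepsilon$ on $D$, and then $(1-\varepsilon)^{-1}\phi_n\in\mathsf{Lip}_{\c}(M)$ is $\geq 1$ on $D$, so $\mathrm{Cap}_{q,g}(D)\leq(1-\varepsilon)^{-q}\int_M|\Id\phi_n|_{g^*}^q\Id\mu_g$; letting $n\to\infty$ forces $\mathrm{Cap}_{q,g}(D)=0$. I expect the main obstacle to be the smoothing step, where one must simultaneously upgrade Lipschitz regularity to $\mathsf{C}^1$, preserve $0\leq\phi_j\leq 1$, keep the functions identically $1$ on the prescribed compacta, and control the $q$-energy — the composition with $\chi$ being exactly the device that reconciles these requirements after the mollification.
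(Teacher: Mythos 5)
The paper does not actually prove this proposition: it simply declares the equivalence ``well known'' and points to Ancona (p.~47, for $q=2$) and Troyanov (Prop.~4.1), so there is no in-paper argument to compare yours against. On its own merits your proof is correct and is the standard one: the converse by rescaling a near-$1$ test function, and the forward direction by exhausting $M$ by compacta $K_j\subset\mathrm{int}(K_{j+1})$, truncating the Lipschitz test functions to $[0,1]$ (which, as you note, does not increase the $q$-energy and forces the value $1$ on a neighbourhood of $K_j$), mollifying in finitely many charts via a partition of unity, and post-composing with a fixed smooth profile $\chi$ to restore the range $[0,1]$, the compact support, and the identity $\phi_j\equiv 1$ on $K_j$ while only multiplying the energy by $\|\chi'\|_\infty^q$. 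All the delicate points (energy non-increase under truncation via Rademacher, $L^q$-convergence of the mollified gradients, uniform closeness guaranteeing $\psi_j>2/3$ on $K_j$ and $<1/3$ off $\mathrm{supp}\,f_j$) are handled correctly, so the argument is complete.
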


\begin{Definition}
A smooth Riemannian manifold $(M,g)$ is called $q$-parabolic if it satisfies the equivalent conditions of Prop. \ref{equivalence}; we recall further that $2$-parabolic Riemannian manifolds are sometimes simply called \emph{parabolic}.
\end{Definition}

The next proposition  shows  that the characterization given in Prop. \ref{equivalence} can be largely relaxed. A proof can be found in \cite{troyan} Th. 3. Here we provide a different proof (which, as the results from \cite{gold}, has the advantage of being appliable to a much more general and possibly nonlocal setting).
\begin{Proposition}
\label{improved}
Let $(M,g)$ be a smooth Riemannian manifold. Then $(M,g)$ is $q$-parabolic if and only if there exists a sequence of functions $\{\psi_n\}\subset \mathsf{W}_{0}^{1,q}(M,g)$ such that 
\begin{itemize}
\item $0\leq \psi_n\leq 1$,
\item $\psi_n\rightarrow 1$ $\mu_g$-a.e. as  $n\rightarrow \infty$,
\item $\int_M|\Id\psi|_{g^*}^q\Id\mu_g\rightarrow 0$ as $n\rightarrow \infty$.
\end{itemize}
\end{Proposition}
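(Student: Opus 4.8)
The plan is to prove the two implications separately. The forward one is immediate: if $(M,g)$ is $q$-parabolic, then the second item of Proposition \ref{equivalence} provides $\{\phi_n\}\subset\mathsf{C}^1_{\c}(M)$ with $0\le\phi_n\le1$, with $\phi_n\to1$ uniformly on compact sets and with $\int_M|\Id\phi_n|_{g^*}^q\,\Id\mu_g\to0$; since $\mathsf{C}^1_{\c}(M)\subset\mathsf{W}^{1,q}_0(M,g)$ and uniform convergence on compacta forces $\mu_g$-a.e. convergence, this same sequence already witnesses the three stated conditions. For the converse, assume $\{\psi_n\}\subset\mathsf{W}^{1,q}_0(M,g)$ has the three properties; the aim is to show $\mathrm{Cap}_{q,g}(D)=0$ for every compact $D$, which is by definition $q$-parabolicity. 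I first record that $0\le\psi_n\le1$ together with $\psi_n\to1$ $\mu_g$-a.e. forces, by dominated convergence over any set of finite measure, that $\psi_n\to1$ in $\IL^q$ on every compact subset.

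\emph{Step 1 (reduction to $\mathsf{Lip}_{\c}$).} The $\psi_n$ need not be compactly supported, so I replace them. For fixed $n$, pick $\chi\in\mathsf{C}^\infty_{\c}(M)$ with $\|\chi-\psi_n\|_{\mathsf{W}^{1,q}(M,g)}$ small and post-compose with the normal contraction $T(s):=\min(1,\max(0,s))$. Since $T$ is $1$-Lipschitz, fixes $[0,1]$, preserves compact support and satisfies $|\Id(T\circ\chi)|_{g^*}\le|\Id\chi|_{g^*}$ $\mu_g$-a.e., choosing $\chi$ close enough produces $\tilde\psi_n\in\mathsf{Lip}_{\c}(M)$ with $0\le\tilde\psi_n\le1$, with $\|\tilde\psi_n-\psi_n\|_{\IL^q(M,g)}<1/n$ and with $\|\Id\tilde\psi_n\|_{\IL^q\Omega^1(M,g)}\le\|\Id\psi_n\|_{\IL^q\Omega^1(M,g)}+1/n$. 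Consequently $\|\Id\tilde\psi_n\|_{\IL^q\Omega^1(M,g)}\to0$ and, on every compact set, $\tilde\psi_n\to1$ in $\IL^q$. Thus I may assume from the start that $\psi_n\in\mathsf{Lip}_{\c}(M)$.

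\emph{Step 2 (gluing to a genuine capacity competitor).} Fix a compact $D$ and choose $\eta\in\mathsf{Lip}_{\c}(M)$ with $0\le\eta\le1$, $\eta\equiv1$ on $D$ and $\supp\eta\subset U$ for some precompact open $U$. Set $f_n:=\eta+(1-\eta)\psi_n$. Then $f_n\in\mathsf{Lip}_{\c}(M)$ (its support lies in $\supp\eta\cup\supp\psi_n$), one has $\psi_n\le f_n\le1$, and $f_n\equiv1$ on $D$. The Leibniz rule gives $\Id f_n=(1-\psi_n)\,\Id\eta+(1-\eta)\,\Id\psi_n$, so that
\[
\|\Id f_n\|_{\IL^q\Omega^1(M,g)}\le L\,\|1-\psi_n\|_{\IL^q(U)}+\|\Id\psi_n\|_{\IL^q\Omega^1(M,g)},
\]
where $L:=\big\|\,|\Id\eta|_{g^*}\,\big\|_{\IL^\infty(M,g)}<\infty$; both summands tend to $0$, since $\psi_n\to1$ in $\IL^q(U)$ and $0\le1-\eta\le1$. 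As $f_n\ge1$ on $D$, the definition of the $q$-capacity forces $\mathrm{Cap}_{q,g}(D)=0$, and since $D$ was arbitrary $(M,g)$ is $q$-parabolic.

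\emph{Main obstacle.} The substantive point is Step 1: one must pass to compactly supported test functions while \emph{simultaneously} retaining the pointwise bound $0\le\,\cdot\,\le1$, the vanishing of the gradient energy, and the local convergence to $1$. The contraction $T$ is exactly what reconciles these three demands, being the identity near the limit value $1$ (so it cannot disturb $\psi_n\to1$) while its $1$-Lipschitz character controls the differential; everything afterward is the product rule together with dominated convergence. I would expect this argument to carry over to the nonlocal framework alluded to in \cite{gold}, since it uses only the Markovian (normal-contraction) property of $T$ and $\IL^q$-energy estimates, rather than any special feature of the differential structure of $g$.
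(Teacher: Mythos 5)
Your argument is correct, but it is genuinely different from the one in the paper. For the converse direction the paper does \emph{not} glue a cutoff onto the sequence; instead it extends $\mathrm{Cap}_{q,g}$ to Borel sets, establishes monotonicity, countable subadditivity over open relatively compact sets, and the Chebyshev-type bound $\mathrm{Cap}_{q,g}\{|f|>a\}\leq \tfrac{2}{a}\left\|\Id f\right\|_{\IL^q\Omega^1(M,g)}$, then covers an open relatively compact $K$ (up to a null set) by the superlevel sets $\{\tilde\phi_n>1/2\}$ of a suitably chosen subsequence and sums the capacities to get $\mathrm{Cap}_{q,g}(K)\leq 4\epsilon$. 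Your Step 1 (approximation by $\mathsf{C}^\infty_{\c}$ followed by the normal contraction $T$) plays the same role as the paper's passage from $\psi_n$ to $\phi_n\in\mathsf{Lip}_{\c}(M)$, but your Step 2 replaces the whole capacity-theoretic machinery by the direct competitor $f_n=\eta+(1-\eta)\psi_n$, whose energy is controlled via the Leibniz rule and the $\IL^q_{\mathrm{loc}}$ convergence $\psi_n\to 1$. This is shorter and avoids subsequences and outer-capacity arguments entirely. What the paper's route buys — and the authors say so explicitly — is applicability to general, possibly nonlocal settings, since subadditivity of capacity and the Chebyshev bound survive where the pointwise identity $\Id f_n=(1-\psi_n)\Id\eta+(1-\eta)\Id\psi_n$ does not; your closing remark that your argument transfers to the nonlocal framework is therefore too optimistic, because Step 2 leans on the product rule, a genuinely local feature, and not only on the Markovian property of $T$. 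For the Riemannian statement actually being proved, however, your proof is complete and valid.
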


\begin{proof}
Clearly we only have to prove that if there exists the asserted sequence of cut-off functions then $(M,g)$ is $q$-parabolic. 
To this end, we first extend the capacity to arbitrary Borel sets $Y\subset M$ as follows,
\begin{align}
\mathrm{Cap}_{q,g}(Y)=\sup\big\{\mathrm{Cap}_{q,g}(K)\left|\>\text{ $K\subset Y$, $K$ is relatively compact in $M$}\big\}\right.\in [0,\infty].\label{gfffz}
\end{align}
Then $Y\mapsto \mathrm{Cap}_{q,g}(Y)$ has the following three properties:
\begin{align}\nn
&\bullet\>\>Y_1\subset Y_2, \>\text{$Y_j$ Borel} \Longrightarrow\mathrm{Cap}_{q,g}(Y_1)\leq \mathrm{Cap}_{q,g}(Y_2),\\
\nn &\bullet\>\>Y_n\subset M\text{ open, relatively compact for all $n\in\IN$}\\\nn &\>\>\>\>\>\>\Longrightarrow 
	\mathrm{Cap}_{q,g}\left(\bigcup_{n\in\IN}Y_n\right)\leq \sum_{n\in\IN} 	\mathrm{Cap}_{q,g}\left(Y_n\right),\\\nn
&\bullet\>\>\mathrm{Cap}_{q,g} \{|f|>a\}\leq \f{2 }{a } \left\|\Id f\right\|_{\IL^q\Omega^1(M,g)}\>\text{ for any $f\in  \mathsf{Lip}_{\c}(M)$, $a>0$}. 
	\end{align}
The first property is trivial, the second one follows from (\ref{gfffz}) and the following simple inequality 
$$
\mathrm{Cap}_{ q,g}\left(\bigcup_{n\leq m}Y_n\right)\leq \sum_{n\leq m} 	\mathrm{Cap}_{ q,g}\left(Y_n\right)\text{ for all $m<\infty$,}
$$
and the last property follows noting that  $\min\{\frac{2f}{a}, 1\}$ is a test function for the relatively compact (open) set $\{|f|>a\}$.\\
Consider  now the sequence $\{\psi_n\}\subset \mathsf{W}_{0}^{1,q}(M,g)$. 
As for any $\psi_n$ there is a sequence $\phi_{l,n}\in \mathsf{Lip}_{\c}(M)$ with $\left\|\phi_{n,l}-\psi_n\right\|_{\mathsf{W}^{1,q}_0(M,g)}\to 0$ as $l\to\infty$, we can find a sequence $\phi_{n}\in \mathsf{Lip}_{\c}(M)$ with \begin{align}\label{sobb}
\left\|\phi_{n}-\psi_n\right\|_{\mathsf{W}^{1,q}_0(M,g)}\leq 1/n\>\text{ for all $n$}. 
\end{align}
Now we fix an arbitrary open relatively compact subset $K\subset M$. From (\ref{sobb}) and the second property in the statement we get
\begin{align}\nn
\left\|\Id\phi_n\right\|_{\IL^q\Omega^1(M,g)}\to 0,\>\>\left\|1_K(\phi_{n}-1\right)\|_{\IL^q(M,g)}\to 0.
\end{align}
In particular we can take subsequence $\phi_n\rq{}$ of $\phi_n$ (which depends on $K$) and a Borel set $Y_K\subset K$ with $\mu(Y_K)=0$ such that $\phi_n\rq{}(x)\to 1$ for all $x\in K\setminus Y_K$.  Of course $\phi_n\rq{}$ still satisfies $\left\|\Id \phi_n\rq{}\right\|_{\IL^q\Omega^1(M,g)}\to 0 $. Thus, given an arbitrary $\epsilon>0$, we can pick a subsequence $\tilde{\phi_n}$ of $\phi_n\rq{}$ (which depends on $K$ and $\epsilon$), such that 
\begin{align*}
\left\|\Id\tilde{\phi_n}\right\|_{\IL^q\Omega^1(M,g)}\leq \epsilon/n^2\text{ for all $n$ and}\>\>
\tilde{\phi_n}(x)\to 1\> \text{for all $x\in K\setminus Y_K$.}
\end{align*}
The convergence  $\tilde{\phi_n}(x)\to 1$ for all $x\in K\setminus Y_K$ implies 
$$
K\setminus Y_K\subset \bigcup_{ n\in\IN}\{\tilde{\phi_n}>1/2\},
$$
so that using the above properties of the capacity we get
$$
\mathrm{Cap}_{q,g}(K)=\mathrm{Cap}_{q,g}(K\setminus Y_K)\leq \mathrm{Cap}_{q,g}\left(\bigcup_{n\in\IN}\{\tilde{\phi_n}>1/2\}\right)\leq \sum_{n=1}^{\infty}\mathrm{Cap}_{q,g}(\{\tilde{\phi_n}>1/2\})\leq 4  \epsilon  ,
$$
where we have used $\mu(Y_K)=0$ and that $K$ is open. 
Thus, taking $\epsilon\to 0$ we arrive at $\mathrm{Cap}_{q,g}(K)=0$. So far we have shown that if $K\subset M$ is open and relatively compact then $\mathrm{Cap}_{q,g}(K)=0$. Now consider   an open relatively compact exhaustion $M=\bigcup_{l\in\IN} K_l$. As $\mathrm{Cap}_{q,g}(K_l)=0$ for all $l$, it follows that for all $l,n\in\IN$ there is a $\phi_{l,n}\in \mathsf{Lip}_{\c}(M)$ such that $\phi_{l,n}\geq 1$ in $K_l$, $\|\Id\phi_{l,n}\|_{\IL^q\Omega^1(M,g)}<1/n$. Then, using the sequence  $\phi_n:=\min\{1,\max\{0,\phi_{n,n}\}\}\in \mathsf{Lip}_{\c}(M)$, we can conclude that $\mathrm{Cap}_{q,g}(D)=0$ for every compact subset of $M$.
\end{proof}

Now we recall the following definition:

\begin{Definition}
Let $(M,g)$ be a smooth Riemannian manifold, let $H_g\geq 0$ be the Friedrichs extension of the Laplace-Beltrami operator $-\Delta_g|_{\mathsf{C}^{\infty}_{\c}(M)}:=(\Id^{\dagger_g}\Id)|_{\mathsf{C}^{\infty}_{\c}(M)}$ in $\IL^2(M,g)$, and let 
$$
(\mathrm{e}^{-tH_g})_{t\geq 0}\subset \ILL(\IL^2(M,g))
$$
be the corresponding heat-semigroup, defined a priori by the spectral calculus. Then $(M,g)$ is said to be \emph{stochastically complete}, if one has\footnote{$\mathrm{e}^{-tH_g}$ has a smooth integral kernel which satifies $\int_M \mathrm{e}^{-tH_g}(x,y)\Id\mu_g(y)\leq 1$ for all $t>0$, $x\in M$, so that one can define $\mathrm{e}^{-tH_g}f(x)$ for bounded functions $f$ on $M$ by $\mathrm{e}^{-tH_g}f(x):=\int_M \mathrm{e}^{-tH_g}(x,y)f(y)\Id\mu_g.$} $\mathrm{e}^{-tH_g}1(x)=1$ for all $t>0$, $x\in M$.
\end{Definition}

The name \lq\lq{}stochastic completeness\rq\rq{} stems from the classical fact that this property is equivalent to Brownian motions on $(M,g)$ being nonexplosive. We close this section recalling the following  properties:

\begin{Proposition}
Let $(M,g)$ be a smooth Riemannian manifold. If $(M,g)$ is parabolic then it is stochastically complete. If $\mu_g(M)<\infty$ then $(M,g)$ is parabolic if and only if it is stochastically complete.
\end{Proposition}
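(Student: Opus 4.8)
The plan is to establish the two assertions separately, in both cases feeding the cut-off sequences produced by parabolicity (Prop.~\ref{improved}) into a spectral/semigroup argument. For the first assertion I would start from the standard potential-theoretic characterisation of stochastic completeness (see \cite{buch}): $(M,g)$ is stochastically complete if and only if, for one and hence every $\lambda>0$, the only bounded function $u$ with $0\le u\le 1$ solving $H_g u=-\lambda u$ weakly is $u\equiv 0$. Such a $u$ is automatically smooth by elliptic regularity, so it suffices to show that parabolicity forces every such $u$ to vanish. Fixing a sequence $\{\psi_n\}\subset\mathsf{W}^{1,2}_0(M,g)$ with $0\le\psi_n\le 1$, $\psi_n\to 1$ $\mu_g$-a.e. and $\|\Id\psi_n\|_{\IL^2\Omega^1(M,g)}\to 0$, I would test the weak equation against $\psi_n^2 u$ (admissible since $u$ is bounded and smooth and $\psi_n$ has compact support). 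Expanding $\Id(\psi_n^2u)=2\psi_n u\,\Id\psi_n+\psi_n^2\,\Id u$ gives the identity
\begin{align*}
\int_M\psi_n^2|\Id u|_{g^*}^2\,\Id\mu_g+\lambda\int_M\psi_n^2u^2\,\Id\mu_g=-2\int_M\psi_n u\,\langle\Id u,\Id\psi_n\rangle_{g^*}\,\Id\mu_g .
\end{align*}

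On the left both terms are nonnegative, while Cauchy--Schwarz together with $0\le u\le 1$ bounds the right-hand side by $2\big(\int_M\psi_n^2|\Id u|_{g^*}^2\,\Id\mu_g\big)^{1/2}\big(\int_M|\Id\psi_n|_{g^*}^2\,\Id\mu_g\big)^{1/2}$, which in turn is $\le \int_M\psi_n^2|\Id u|_{g^*}^2\,\Id\mu_g+\|\Id\psi_n\|^2_{\IL^2\Omega^1(M,g)}$. Cancelling the common energy term leaves $\lambda\int_M\psi_n^2u^2\,\Id\mu_g\le\|\Id\psi_n\|^2_{\IL^2\Omega^1(M,g)}\to 0$, and Fatou's lemma (using $\psi_n\to 1$ a.e.) yields $\int_M u^2\,\Id\mu_g=0$, i.e. $u\equiv 0$. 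This proves the first assertion, with no assumption on $\mu_g(M)$. The only points requiring care are the admissibility of the test function and the smoothness of $u$; the decisive mechanism is precisely that parabolicity annihilates the cross term via $\|\Id\psi_n\|_{\IL^2\Omega^1(M,g)}\to 0$.

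For the converse under $\mu_g(M)<\infty$ I would argue as follows. Finiteness of the volume places the constant function $1$ in $\IL^2(M,g)$, so the stochastic completeness relation $\mathrm{e}^{-tH_g}1=1$ may be differentiated at $t=0$ to conclude $1\in\dom(H_g)$ with $H_g1=0$, hence $1\in\dom(H_g^{1/2})$. As $H_g$ is the Friedrichs extension, its form domain $\dom(H_g^{1/2})$ coincides with $\mathsf{W}^{1,2}_0(M,g)$, so there exist $\phi_n\in\mathsf{C}^\infty_{\c}(M)$ with $\phi_n\to 1$ in $\IL^2(M,g)$ and $\|\Id\phi_n\|_{\IL^2\Omega^1(M,g)}\to 0$. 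Truncating, $\psi_n:=\min\{1,\max\{0,\phi_n\}\}\in\mathsf{W}^{1,2}_0(M,g)$ still satisfies $0\le\psi_n\le 1$ and $|\Id\psi_n|_{g^*}\le|\Id\phi_n|_{g^*}$ a.e., so $\|\Id\psi_n\|_{\IL^2\Omega^1(M,g)}\to 0$ while $\psi_n\to 1$ in $\IL^2(M,g)$ and hence $\mu_g$-a.e. along a subsequence; Prop.~\ref{improved} then gives parabolicity. Combined with the first part this establishes the equivalence. I expect the only real subtlety here to be the identification $\dom(H_g^{1/2})=\mathsf{W}^{1,2}_0(M,g)$, which is exactly the defining feature of the Friedrichs extension.
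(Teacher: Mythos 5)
Your argument is correct, and it splits naturally into two halves that relate differently to the paper. For the second assertion (finite volume plus stochastic completeness implies parabolicity) you follow essentially the same route as the paper: conclude $1=\mathrm{e}^{-tH_g}1\in\dom(H_g)\subset\mathsf{W}^{1,2}_0(M,g)$, approximate $1$ by smooth compactly supported functions in the Sobolev norm, and truncate; the only cosmetic difference is that you obtain $1\in\dom(H_g)$ by differentiating the semigroup at $t=0$, while the paper uses the smoothing property $\mathrm{e}^{-tH_g}:\IL^2(M,g)\to\dom(H_g)$ for $t>0$ -- both are valid. For the first assertion the paper gives no proof at all, citing it as a classical fact from \cite{masa}; you instead supply a complete argument via the characterisation of stochastic completeness through bounded nonnegative $\lambda$-harmonic functions, killed by a Caccioppoli-type estimate with the parabolic cut-offs. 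This is the standard proof and your computation checks out: testing $H_gu=-\lambda u$ against $\psi_n^2u$, estimating the cross term by Cauchy--Schwarz and Young, cancelling the (finite, since $\psi_n$ is compactly supported and $u$ is smooth) energy term, and applying Fatou gives $u\equiv 0$. Two small points of hygiene: the functions produced by Prop.~\ref{improved} lie in $\mathsf{W}^{1,2}_0(M,g)$ and need not have compact support, so for the admissibility of $\psi_n^2u$ you should take the compactly supported Lipschitz (or $\mathsf{C}^1_{\c}$) cut-offs from the definition of parabolicity rather than the relaxed sequence; and you should record that the relevant characterisation requires $u\ge 0$, which your normalisation $0\le u\le 1$ respects. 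Your write-up buys a self-contained proof of the implication the paper outsources, at the cost of invoking the (also nontrivial, but standard) equivalence between stochastic completeness and the uniqueness of bounded $\lambda$-harmonic functions.
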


\begin{proof} The first property is a classical fact  (cf. \cite{masa}). The second property has been noted in a more general context in \cite{lenz}. We give a simple proof within our Riemannian framework: If $\mu_g(M)<\infty$ then $1=\mathrm{e}^{-tH_g}1\in \mathrm{Dom}(H_g)\subset \mathsf{W}^{1,2}_0(M,g)$. Therefore there exists a sequence $\psi_n\in \mathsf{C}_{\c}^{\infty}(M)$ such that $\psi_n\rightarrow 1$ in $\mathsf{W}^{1,2}_0(M,g)$. Defining $\phi_n:=\min\{1,\psi_n\}\in \mathsf{Lip}_{\c}(M)$ we get a sequence that makes $(M,g)$ parabolic. 
\end{proof}

\section{Stratified pseudomanifolds with iterated edge metrics}

In this section we come to the main result of this paper. We start by  briefly recalling the definition of smoothly stratified pseudomanifold with a Thom-Mather stratification. First we recall that, given a topological space $Z$, $C(Z)$ stands for the cone over $Z$ that is $Z\times [0,2)/\sim$ where $(p,t)\sim (q,r)$ if and only if $r=t=0$.

\begin{Definition}   
\label{thom}
 A smoothly Thom-Mather-stratified pseudomanifold $X$  of dimension $m$  is a metrizable, locally compact, second countable space which admits a locally finite decomposition into a union of locally closed strata $\mathfrak{G}=\{Y_{\alpha}\}$, where each $Y_{\alpha}$ is a smooth, open and connected manifold, with dimension depending on the index $\alpha$. We assume the following:
\begin{enumerate}
\item[(i)] If $Y_{\alpha}$, $Y_{\beta} \in \mathfrak{G}$ and $Y_{\alpha} \cap \overline{Y}_{\beta} \neq \emptyset$ then $Y_{\alpha} \subset \overline{Y}_{\beta}$
\item[(ii)]  Each stratum $Y$ is endowed with a set of control data $T_{Y} , \pi_{Y}$ and $\rho_{Y}$ ; here $T_{Y}$ is a neighborhood of $Y$ in $X$ which retracts onto $Y$, $\pi_{Y} : T_{Y} \rightarrow Y$
is a fixed continuous retraction and $\rho_{Y}: T_{Y}\rightarrow [0, 2)$  is a continuous  function in this tubular neighborhood such that $\rho_{Y}^{-1}(0) = Y$ . Furthermore,
we require that if $Z \in \mathfrak{G}$ and $Z \cap T_{Y}\neq \emptyset$  then
$(\pi_{Y} , \rho_{Y} ) : T_{Y} \cap Z \rightarrow Y\times [0,2) $
is a proper smooth submersion.
\item[(iii)] If $W, Y,Z \in \mathfrak{G}$, and if $p \in T_{Y} \cap T_{Z} \cap W$ and $\pi_{Z}(p) \in T_{Y} \cap Z$ then
$\pi_{Y} (\pi_{Z}(p)) = \pi_{Y} (p)$ and $\rho_{Y} (\pi_{Z}(p)) = \rho_{Y} (p)$.
\item[(iv)] If $Y,Z \in \mathfrak{G}$, then
$Y \cap \overline{Z} \neq \emptyset \Leftrightarrow T_{Y} \cap Z \neq \emptyset$ ,
$T_{Y} \cap T_{Z} \neq \emptyset \Leftrightarrow Y\subset \overline{Z}, Y = Z\ or\ Z\subset \overline{Y} .$
\item[(v)]  For each $Y \in \mathfrak {G}$, the restriction $\pi_{Y} : T_{Y}\rightarrow Y$ is a locally trivial fibration with fibre the cone $C(L_{Y})$ over some other stratified space $L_{Y}$ (called the link over $Y$ ), with atlas $\mathcal{U}_{Y} = \{(\phi,\mathcal{U})\}$ where each $\phi$ is a trivialization
$\pi^{-1}_{Y} (U) \rightarrow U \times C(L_{Y} )$, and the transition functions are stratified isomorphisms  which preserve the rays of each conic
fibre as well as the radial variable $\rho_{Y}$ itself, hence are suspensions of isomorphisms of each link $L_{Y}$ which vary smoothly with the variable $y\in U$.
\item[(vi)] For each $j$ let $X_{j}$ be the union of all strata of dimension less or equal than $j$, then 
$$
X_{m-1}=X_{m-2}\ \text{and  $X\setminus X_{m-2}$ dense\ in $X$}
$$
\end{enumerate}
\end{Definition}

The \emph{depth} of a stratum $Y$ is largest integer $k$ such that there is a chain of strata $Y=Y_{k},...,Y_{0}$ such that $Y_{j}\subset \overline{Y_{j-1}}$ for $1\leq j\leq k.$ A stratum of maximal depth is always a closed subset of $X$.  The  maximal depth of any stratum in $X$ is called the \emph{depth of $X$} as stratified spaces.
 Consider the filtration
\begin{equation}
X = X_{m}\supset X_{m-1}= X_{m-2}\supset X_{m-3}\supset...\supset X_{0}.
\label{pippo}
\end{equation}
 We refer to the open subset $X\setminus X_{m-2}$ of a smoothly Thom-Mather-stratified pseudomanifold $X$ as its regular set, and the union of all other strata as the singular set,
$$\reg(X):=X\setminus \sing(X)\ \text{where}\ \sing(X):=\bigcup_{Y\in \mathfrak{G}, \depth(Y)>0 }Y. $$
Given two Thom-Mather smoothly stratified pseudomanifolds  $X$ and $X'$,  a stratified isomorphism between them is a
homeomorphism $F: X\rightarrow X'$ which carries the open strata of $X$ to the open strata of $X'$
diffeomorphically, and such that $\pi'_{F(Y )}\circ  F = F \circ \pi_Y$ , $\rho'_{F(Y)}\circ F=\rho_Y$  for all $Y\in \mathfrak{G}(X)$.
For more details, properties and comments we refer to \cite{ALMP}, \cite{BHS}, \cite{RrHS},  \cite{JMA} , \cite{ver}, and also \cite{pflaum}. Here we point out that a large class of topological space such as irreducible complex analytic spaces or  quotient of manifolds through a proper Lie group action belong to this class of spaces.\\ 
Now we proceed introducing the class of smooth Riemmanian metrics on $\reg(X)$ which we are interested in. The definition is given by induction on the depth of $X$.  We label by $\hat{c}:=(c_2,...,c_m)$ a $(m-1)$-tuple of non negative real numbers. In order to state this  definition we need to recall recall that, given two Riemannian metrics $g$ and $h$ on a manifold $M$, $g$ and $h$ are said to be \emph{quasi-isometric}, briefly $g\sim h$, if there exists a real number $c>0$ such that $c^{-1}h\leq g\leq ch$.

\begin{Definition}
\label{iter}
 Let $X$ be a smoothly Thom-Mather-stratified pseudomanifold and let $g$ be a Riemannian metric on $\reg(X)$. If $\depth(X)=0$, that is $X$ is a smooth manifold, a $\hat{c}$-iterated  edge metric is understood to be any smooth Riemannian metric on $X$. Suppose now that $\depth(X)=k$ and that the definition of $\hat{c}$-iterated  edge metric is given in the case $\depth(X)\leq k-1$; then we call a smooth Riemannian metric $g$ on $\reg(X)$ a  $\hat{c}$-\emph{iterated  edge metric}  if it satisfies the following properties:
\begin{itemize}
\item Let $Y$ be a stratum of $X$ such that $Y\subset X_{i}\setminus  X_{i-1}$; by   definition \ref{thom} for each $q\in Y$ there exist an open neighbourhood $U$ of $q$ in $Y$ such that 
$$
\phi:\pi_{Y}^{-1}(U)\longrightarrow U\times C(L_{Y})
$$
 is a stratified isomorphism; in particular, 
$$
\phi:\pi_{Y}^{-1}(U)\cap \reg(X)\longrightarrow U\times \reg(C(L_{Y}))
$$
is a smooth diffeomorphism. Then, for each $q\in Y$, there exists one of these trivializations $(\phi,U)$ such that $g$ restricted on $\pi_{Y}^{-1}(U)\cap \reg(X)$ satisfies the following properties:
\begin{equation} 
(\phi^{-1})^{*}(g|_{\pi_{Y}^{-1}(U)\cap \reg(X)})\sim  \Id r^2+h_{U}+r^{2c_{m-i}}g_{L_{Y}}
\label{yhn}
\end{equation}
 where $m$ is the dimension of $X$, $h_{U}$ is a Riemannian metric defined over $U$ and  $g_{L_{Y}}$ is a $(c_2,...,c_{m-i-1})$-\emph{iterated edge metric}  on $\reg(L_{Y})$, $\Id r^2+h_{U}+r^{2c_{m-i}}g_{L_{Y}}$ is a Riemannian metric of product type on $U\times \reg(C(L_{Y}))$ and with $\sim$ we mean \emph{quasi-isometric}. 
\end{itemize}
\label{edge}
\end{Definition} 
When $\mathrm{depth}(X)=1$ we will say that $g$ is a $\hat{c}$-\emph{edge metric} and when $\mathrm{depth}(X)=1$ and $\hat{c}=1$ we will say that $g$ is a \emph{edge metric}. We remark that in \eqref{yhn} the neighborhood  $U$ can be chosen sufficiently small so that it is diffeomorphic to $(0,1)^i$ and $h_U$ it is quasi-isometric to the Euclidean metric restricted on $(0,1)^i$. Moreover we point out that with this kind of Riemannian metrics we have $\mu_g(\reg(X))<\infty$ in case $X$ is compact. There is the following nontrivial existence result:

\begin{Proposition}
Let $X$ be a smoothly Thom-Mather-stratified pseudomanifold of dimension $m$. For any $(m-1)$-tuple  of positive numbers $\hat{c}=(c_2,...,c_m)$, there exists  a smooth Riemannian metric on $\reg(X)$ which is a $\hat{c}$-iterated edge metric.
\end{Proposition}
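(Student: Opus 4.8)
The plan is to induct on $d=\depth(X)$, following the inductive structure of Definition \ref{iter}. If $d=0$, then $X=\reg(X)$ is a smooth manifold and any Riemannian metric --- which exists by the classical partition-of-unity argument --- is by definition a $\hat c$-iterated edge metric. So assume $d\ge 1$ and that the assertion holds for all Thom--Mather stratified pseudomanifolds of depth $<d$. For every singular stratum $Y\subset X_i\setminus X_{i-1}$ the link $L_Y$ is a compact Thom--Mather stratified pseudomanifold of dimension $m-i-1$ and of depth strictly less than $d$, so the inductive hypothesis supplies a $(c_2,\dots,c_{m-i-1})$-iterated edge metric $g_{L_Y}$ on $\reg(L_Y)$, which I fix once and for all.

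Next I would glue local model metrics. Using second countability and local finiteness of $\mathfrak G$, choose a locally finite open cover of $X$ whose members are of two kinds: charts $V_\beta\subset\reg(X)$ lying in the smooth part, and sets of the form $\pi_Y^{-1}(U_\beta)$ attached to singular strata, with $(\phi_\beta,U_\beta)$ a trivialization as in Definition \ref{thom}(v), $U_\beta$ relatively compact, diffeomorphic to $(0,1)^i$, and $h_{U_\beta}$ quasi-isometric to the flat metric. On a chart of the first kind put an arbitrary smooth metric, and on $\pi_Y^{-1}(U_\beta)\cap\reg(X)$ put the pulled-back model
\[
g_\beta:=(\phi_\beta^{-1})^*\bigl(\Id r^2+h_{U_\beta}+r^{2c_{m-i}}g_{L_Y}\bigr).
\]
For a smooth partition of unity $\{\chi_\beta\}$ subordinate to the induced cover of $\reg(X)$ I set $g:=\sum_\beta\chi_\beta g_\beta$, a smooth Riemannian metric since a convex combination of inner products is positive definite. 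The decisive elementary fact is that if, at a given point, the finitely many nonzero contributions all satisfy $c^{-1}h\le g_\beta\le c\,h$ for a common $h$ and constant $c$, then so does $\sum_\beta\chi_\beta g_\beta$; hence quasi-isometry to a fixed model is inherited by the glued metric as soon as the contributing pieces are uniformly quasi-isometric to that model.

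It then remains to verify \eqref{yhn} near an arbitrary singular stratum $Y$. I would arrange the cover and partition of unity so that on a neighborhood of $Y$ one has $\sum_\beta\chi_\beta=1$ with the sum restricted to the indices attached to $Y$ itself or to strata $Z$ with $Y\subset\overline Z$; this is possible because the sets $\pi_Y^{-1}(U_\beta)$ already cover a neighborhood of $Y$, and because condition (iv) forces $T_Z\cap T_Y\ne\emptyset$ only when $Y\subset\overline Z$, $Y=Z$ or $Z\subset\overline Y$, the last being excluded near the open stratum $Y$. For two indices attached to $Y$, the transition $\phi_\beta\circ\phi_{\beta'}^{-1}$ preserves the rays and the radial variable $r=\rho_Y$ and is the suspension of a smoothly $y$-varying stratified isomorphism of the compact link $L_Y$; since stratified isomorphisms preserve the radial functions of $L_Y$, the two models are uniformly quasi-isometric over the relatively compact base (again using the inductive structure of $g_{L_Y}$). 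For an index attached to a less deep $Z$ with $Y\subset\overline Z$, the commutation relations $\pi_Y\circ\pi_Z=\pi_Y$ and $\rho_Y\circ\pi_Z=\rho_Y$ of condition (iii) make the cone structure of $Z$ compatible with that of $Y$, so its model is again quasi-isometric to $\Id r^2+h_{U_\beta}+r^{2c_{m-i}}g_{L_Y}$. Granting these comparisons, the convex-combination fact above yields \eqref{yhn} and closes the induction.

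The hard part will be exactly this last step: showing that all local models contributing near a stratum are mutually quasi-isometric with a uniform constant. This is where the full strength of the Thom--Mather control data is needed --- the radial-variable-preserving, suspension form of the transition functions in (v) together with the commutation relations in (iii) --- and it is the one point that genuinely requires the inductive hypothesis on the links rather than a purely local computation.
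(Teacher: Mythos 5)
The paper does not actually prove this proposition: its ``proof'' is the single line ``See \cite{BHS} or \cite{ALMP} in the case $\hat{c}=(1,\dots,1)$'', so there is no argument in the text to compare yours against. Your strategy --- induct on depth, fix an iterated edge metric $g_{L_Y}$ on each link by the inductive hypothesis, glue pulled-back product models with a partition of unity, and use that a convex combination of metrics uniformly quasi-isometric to a common model is again quasi-isometric to it --- is precisely the construction carried out in those references, and the skeleton is sound.

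As written, however, the proposal stops short of a proof at exactly the point you flag yourself: the mutual quasi-isometry of the contributing local models is asserted (``Granting these comparisons\dots'') rather than established, and that step is the entire mathematical content. Two items are concretely missing. First, for two trivializations over the same stratum $Y$, the transition $(y,r,z)\mapsto(y,r,\psi_y(z))$ pulls the model back to $\Id r^2+h+r^{2c_{m-i}}\psi_y^*g_{L_Y}$ \emph{plus} cross terms involving $r^{2c_{m-i}}g_{L_Y}(\partial_y\psi_y,\cdot)\otimes \Id y$ and a term $r^{2c_{m-i}}|\partial_y\psi_y|^2_{g_{L_Y}}\,\Id y\otimes \Id y$; absorbing these into $h_U$ by Cauchy--Schwarz requires a \emph{uniform} bound on $|\partial_y\psi_y|_{g_{L_Y}}$ and a uniform two-sided bound $\psi_y^*g_{L_Y}\sim g_{L_Y}$. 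Both of these need an inductive lemma of the form: a smooth family of stratified isomorphisms of a compact stratified space is uniformly quasi-isometric for a fixed iterated edge metric, with $y$-derivative bounded in that metric (here one uses that $\psi_y$ preserves the radial functions of $L_Y$, so $\partial_y\psi_y$ has no radial component and its degenerating link-norm stays bounded). That lemma is neither stated nor proved. Second, the comparison between the model attached to $Y$ and the model attached to a shallower stratum $Z$ with $Y\subset\overline Z$ requires, via (iii) and (v), identifying $T_Z\cap\pi_Y^{-1}(U)$ with a neighborhood of the corresponding stratum of $U\times C(L_Y)$ and matching $\rho_Z$ with (essentially) $\rho_Y$ times the radial variable of the cone inside $L_Y$; only after this matching does $\Id r_Z^2+h_{U_Z}+r_Z^{2c_{\cod(Z)}}g_{L_Z}$ become comparable to the $Y$-model, using that $g_{L_Y}$ is itself an iterated edge metric near $\sing(L_Y)$. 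Neither computation is conceptually hard once set up, but together they \emph{are} the proposition; a self-contained proof must supply them, or else one should do what the authors do and simply invoke \cite{BHS}, \cite{ALMP}.
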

\begin{proof}
See \cite{BHS} or \cite{ALMP} in the case $\hat{c}=(1,...,1,...,1)$.
\end{proof}

The importance of this class of metrics lies on its deep connection with the topology of $X$. In fact, as pointed out by Cheeger in his seminal paper \cite{JCh} (see also  \cite{FBe}, \cite{FrB}, \cite{FraBei}, \cite{HMa} and \cite{MNa}  for further developments ) the $L^2$-cohomology of $\reg(X)$ associated to an iterated edge metric is isomorphic to the intersection cohomology  of $X$ associated with a perversity depending only on $\hat{c}$. In other words the $L^2$-cohomology of this kind of metrics (which a priori is an object that lives only  on $\reg(X)$) provides a non trivial topological information of the whole space $X$.

\begin{Theorem}
\label{straticom}
Let $X$ be a compact smoothly Thom-Mather-stratified pseudomanifold of dimension $m$. Let $q\in [1,\infty)$ and let $g$ be a smooth Riemmanian metric on $\reg(X)$ such that $g$ is a $\hat{c}$-iterated edge metric with $\hat{c}=(c_2,...,c_m)$. Assume that for every singular stratum $Y$ of $X$ one has
\begin{equation}
\label{condition1}
c_{m-i}\cdot(m-i-1)\geq q-1, \quad\text{where $i:=\dim(Y)$,}  
\end{equation}
and that moreover for every singular stratum $Y$ of $X$ with $\depth(Y)>1$, one has
\begin{equation}
\label{condition2}
c_{m-i}\cdot(m-i-1-q)>-1, \quad\text{where again $i:=\dim(Y)$.} 
\end{equation}
Then $(\reg(X), g)$ is $z$-parabolic for each $z\in [1,q]$. In particular, given $z\in [1,q]$ and a continuous compactly supported function $ 0\not\equiv h:\reg(X)\to\IR$, the nonlinear $z$-Laplace equation
$$
 \Id^{\dagger_g} \left(|\Id u|_{g^*}^{z-2}\Id u\right)=h 
$$
has no weak solution $u\in\mathsf{W}^{1,z}_{\loc}(\reg(X))$ with $\left\|\Id u\right\|_{\IL^q\Omega^1(\reg(X),g)}<\infty$.
\end{Theorem}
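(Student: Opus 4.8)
The plan is to produce, via the relaxed criterion of Proposition \ref{improved}, a sequence $\{\Psi_n\}\subset\mathsf{Lip}_{\c}(\reg(X))\subset\mathsf{W}^{1,q}_0(\reg(X),g)$ with $0\le\Psi_n\le1$, $\Psi_n\to1$ pointwise $\mu_g$-a.e., and $\|\Id\Psi_n\|_{\IL^q\Omega^1(\reg(X),g)}\to0$, which shows that $(\reg(X),g)$ is $q$-parabolic. Since $X$ is compact, it has only finitely many strata and $\mu_g(\reg(X))<\infty$; the finiteness of the volume will let me descend at the very end from the exponent $q$ to every $z\in[1,q]$, because H\"older's inequality gives $\|\Id\Psi_n\|_{\IL^z\Omega^1}\le\mu_g(\reg(X))^{1/z-1/q}\|\Id\Psi_n\|_{\IL^q\Omega^1}\to0$ while $0\le\Psi_n\le1$ and $\Psi_n\to1$ are unchanged. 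The last assertion of the theorem is then immediate: by the equivalence recalled in the introduction (after \cite{troyan}), for $z\in(1,q]$ a $z$-parabolic manifold admits no weak solution of the $z$-Laplace equation with $\IL^z$-integrable differential, the case $z=1$ being vacuous. Thus everything reduces to constructing the $\Psi_n$ for the exponent $q$.

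I would build these cut-offs by induction on $\depth(X)$. The base case $\depth(X)=0$ is trivial, as then $\reg(X)=X$ is a closed manifold and $\Psi_n\equiv1$ works. For the inductive step, fix a singular stratum $Y$ with $i:=\dim(Y)$, so that near $Y$ the metric is quasi-isometric on a model chart $U\times\reg(C(L_Y))$ to $\Id r^2+h_U+r^{2c_{m-i}}g_{L_Y}$ by Definition \ref{iter}, where $\dim\reg(L_Y)=m-i-1$. Consequently
\[
\Id\mu_g\sim r^{c_{m-i}(m-i-1)}\,\Id r\,\Id\mu_{h_U}\,\Id\mu_{g_{L_Y}},\qquad |\Id r|_{g^*}\sim1,
\]
while for a function $\chi$ of the link variable alone one has $|\Id\chi|_{g^*}\sim r^{-c_{m-i}}|\Id\chi|_{g^*_{L_Y}}$. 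The natural cut-off near $Y$ is the product $\Psi=\psi(r)\,\chi$, where $\psi$ is a radial profile vanishing for small $r$ and $\chi\in\mathsf{Lip}_{\c}(\reg(L_Y))$ is a parabolicity cut-off for the link supplied by the inductive hypothesis. Orthogonality of the two blocks gives
\[
|\Id\Psi|^2_{g^*}\sim\chi^2|\psi'(r)|^2+\psi^2\,r^{-2c_{m-i}}|\Id\chi|^2_{g^*_{L_Y}},
\]
so that, with a constant $C_q$ from $(a+b)^{q/2}\le C_q(a^{q/2}+b^{q/2})$, the energy $\int|\Id\Psi|^q_{g^*}\,\Id\mu_g$ splits into a radial term and a link term.

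The radial term is bounded, up to the finite factors $\mu_{h_U}(U)$ and $\mu_{g_{L_Y}}(\reg(L_Y))$, by $\int_0^\epsilon|\psi'(r)|^q r^{c_{m-i}(m-i-1)}\,\Id r$; for $q>1$ its infimum over profiles with $\psi(0^+)=0$, $\psi(\epsilon)=1$ equals $\big(\int_0^\epsilon r^{-c_{m-i}(m-i-1)/(q-1)}\,\Id r\big)^{-(q-1)}$ and tends to $0$ exactly when the bracketed integral diverges at $0$, i.e. when $c_{m-i}(m-i-1)\ge q-1$, which is \eqref{condition1} (for $q=1$ one argues directly with the infimum of the radial weight, or descends from a larger exponent using $\mu_g(\reg(X))<\infty$). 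The link term equals $\big(\int_0^\epsilon\psi(r)^q r^{c_{m-i}(m-i-1-q)}\,\Id r\big)\cdot\|\Id\chi\|^q_{\IL^q\Omega^1(\reg(L_Y),g_{L_Y})}$; since $\psi\le1$, the radial factor is finite precisely when $c_{m-i}(m-i-1-q)>-1$, which is \eqref{condition2}, while the second factor is made as small as we wish by the inductive hypothesis, because $(\reg(L_Y),g_{L_Y})$ is a compact stratified pseudomanifold of strictly smaller depth. That the hypotheses persist for $L_Y$ is a bookkeeping point: a singular stratum of $L_Y$ has the same codimension in $L_Y$ as the associated stratum of $X$ has in $X$ (cone and base directions are adjoined uniformly), and $g_{L_Y}$ is a $(c_2,\dots,c_{m-i-1})$-iterated edge metric, so \eqref{condition1}--\eqref{condition2} for $X$ restrict to exactly the same inequalities for $L_Y$ and the induction closes. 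Finally, if $\depth(Y)=1$ the link is a closed manifold, one may take $\chi\equiv1$, the link term disappears, and only \eqref{condition1} is needed; this is why \eqref{condition2} is imposed solely for strata with $\depth(Y)>1$.

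It remains to globalize. Using that there are finitely many strata and that $\sing(X)$ is covered by the control neighborhoods $T_Y$, I would glue the local product cut-offs, exploiting the Thom--Mather compatibility of the retractions $\pi_Y$ and radial functions $\rho_Y$ (axioms (iii)--(v) of Definition \ref{thom}); concretely one can either patch by a partition of unity subordinate to the $T_Y$ or form the product $\Psi_n=\prod_Y\psi_{n,Y}(\rho_Y)$ and estimate $\|\Id\Psi_n\|_{\IL^q\Omega^1}\le\sum_Y\|\Id\psi_{n,Y}(\rho_Y)\|_{\IL^q\Omega^1}$. I expect the genuine difficulty to lie in the overlap (\lq\lq corner\rq\rq) regions $T_Y\cap T_Z$ with $Y\subset\overline Z$ and $\depth(Y)>\depth(Z)$: there the cut-off attached to the less singular stratum $Z$ must be differentiated in the metric scaled by the radial variable $r$ of the deeper stratum $Y$, so its differential acquires precisely the factor $r^{-c_{m-i}}$ analysed above, and the resulting integral is finite only under \eqref{condition2} for $Y$. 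Making all these local contributions simultaneously summable and controlling the iterated (multi-corner) interactions in higher depth, while keeping $\Psi_n\to1$, is the heart of the argument; the decisive analytic inputs remain the one-dimensional weighted capacity estimate governed by \eqref{condition1} and the scaling-integrability governed by \eqref{condition2}.
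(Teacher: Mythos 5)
Your proposal is correct and follows essentially the same route as the paper: induction on $\depth(X)$, local cut-offs of the form (radial profile)$\,\times\,$(link cut-off) on the model charts $U\times C(L_Y)$, with \eqref{condition1} governing the vanishing of the radial energy and \eqref{condition2} guaranteeing integrability of the $r^{-c_{m-i}}$-scaled link term, followed by the finite-volume descent from $q$ to $z\in[1,q)$. The globalization you flag as the remaining difficulty is carried out in the paper exactly along the lines you suggest, via a finite partition of unity with bounded differential subordinate to finitely many such charts together with one open set contained in $\reg(X)$ (Prop.~\ref{partition}); the corner interactions you worry about are already absorbed by the inductive link cut-offs combined with \eqref{condition2}, so no separate multi-corner analysis is required.
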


Before to give a proof of the above theorem we recall the following proposition.
\begin{Proposition}
\label{partition}
Let $X$ be a smoothly Thom-Mather-stratified pseudomanifold , and let $\mathcal{U}_{A}=\{U_{\alpha}\}_{\alpha\in A}$ be an open cover of $X$. Then  there is a bounded partition of unity with bounded differential subordinate to $\mathcal{U}_{A}$, meaning that there exists a family of functions $\lambda_{\alpha}:X\rightarrow [0,1], \alpha\in A$ such that
\begin{enumerate}
\item Each $\lambda_{\alpha}$ is continuous and $\lambda_{\alpha}|_{\reg(X)}$ is smooth.
\item  $\supp(\lambda_{\alpha})\subset U_{\alpha}$ for some $\alpha\in A$.
\item $\{\supp(\lambda_{\alpha})\}_{\alpha\in A}$ is a locally finite cover of $X$.
\item For each $x\in X$ one has $\sum_{\alpha\in A}\lambda_{\alpha}(x)=1$.
\item There are constants $C_{\alpha}<\infty$ such that each $\lambda_{\alpha}$ satisfies $\|\Id\lambda_{\alpha}|_{\reg(X)}\|_{\IL^{\infty}\Omega^1(\reg(X),g)}\leq C_{\alpha}$.
\label{ygn}
\end{enumerate}
\end{Proposition}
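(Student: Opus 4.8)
The plan is to argue by induction on $\depth(X)$, building the members of the partition out of model functions adapted to the local product form of the iterated edge metric. Since $X$ is metrizable, locally compact and second countable, it is paracompact, so I would first pass to a locally finite refinement of $\mathcal{U}_A$ consisting of \emph{distinguished} charts of two kinds: charts contained in $\reg(X)$, on which $g$ is an ordinary smooth metric, and cone charts $\pi_Y^{-1}(U)\cong U\times C(L_Y)$ adapted to the singular strata $Y$, on which the trivialisation $\phi$ turns $g$ into something quasi-isometric to $\Id r^2+h_U+r^{2c_{m-i}}g_{L_Y}$, with $i=\dim(Y)$, $U\cong(0,1)^i$ and $h_U$ quasi-isometric to the Euclidean metric. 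It suffices to produce a partition of unity subordinate to this refinement with each member supported in a single distinguished chart: assigning to each member a cover element containing its chart yields subordination to $\mathcal{U}_A$ and gives (1)--(4), while the per-member bound (5) will follow from the explicit form of the model functions.

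For the base case $\depth(X)=0$ the space is a smooth manifold, $g$ is smooth, and an ordinary partition of unity subordinate to a locally finite refinement with precompact supports has smooth, compactly supported members, each with bounded $g$-differential. For the inductive step I would take the members over a cone chart to be products $\psi(r)\,\chi(y)\,\Phi(z)$, where $\psi$ runs over a partition of unity in the radial variable, $\chi$ over a smooth partition of unity on $U\subset Y$, and $\Phi$ over a partition of unity on $\reg(L_Y)$ furnished by the inductive hypothesis --- the link $L_Y$ being, by the properness in Definition \ref{thom}(ii), a compact stratified pseudomanifold of strictly smaller depth carrying the $(c_2,\dots,c_{m-i-1})$-iterated edge metric $g_{L_Y}$. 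In the product metric $|\Id r|_{g^*}\sim 1$ and $|\Id\chi|_{g^*}\lesssim|\Id\chi|_{h_U^*}$ are under control, so the radial and base factors contribute bounded differentials.

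The only delicate point --- and the main obstacle --- is the link factor: for a covector $\theta$ on $L_Y$ one has $|\theta|_{g^*}=r^{-c_{m-i}}|\theta|_{g_{L_Y}^*}$, so $|\Id\Phi|_{g^*}=r^{-c_{m-i}}|\Id\Phi|_{g_{L_Y}^*}$ blows up as $r\to 0$ although $\Phi$ has bounded differential on the link. To cure this I would damp the link dependence near the tip of the cone, replacing each $\Phi$ by $\eta(r)\Phi(z)+(1-\eta(r))\Phi(z_0)$ with $\eta$ a radial cut-off vanishing for $r<\epsilon$ and a fixed basepoint $z_0$; this keeps the link members summing to $1$ at every $r$, while the offending term $\eta(r)r^{-c_{m-i}}|\Id\Phi|_{g_{L_Y}^*}$ is now supported in $\{r\geq\epsilon\}$, where $r^{-c_{m-i}}$ is bounded. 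Crucially, this damping does not spoil subordination: since $L_Y$ is compact, the tube lemma shows that every open subset of $X$ meeting $Y$ contains a full small cone $U'\times C_\delta(L_Y)$ about each of its points of $Y$, so near $Y$ the cover cannot separate link directions and a partition that is link-independent for small $r$ remains subordinate to it. The same link-independence makes each member $\psi(r)\chi(y)\big(\eta(r)\Phi(z)+(1-\eta(r))\Phi(z_0)\big)$ restrict on $Y$ to the link-constant value $\psi(0)\chi(y)\Phi(z_0)$, hence extend continuously across $Y$; combined with the inductive continuity down the filtration this gives (1). Assembling the model functions over the base and radial partitions and over the regular charts then produces the desired family, and (2)--(5) are checked chart by chart, the constants $C_\alpha$ being finite because each member is a single product supported in one distinguished chart.
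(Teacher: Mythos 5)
The paper does not prove this proposition at all: it is imported verbatim from Youssin, \cite{BYOU} Prop.\ 3.2.2, so there is no in-paper argument to compare against, and your proposal should be judged as a reconstruction of that cited result. Its key mechanism is correct and is exactly the one that matters: in a cone chart the model metric $\Id r^2+h_U+r^{2c_{m-i}}g_{L_Y}$ rescales link covectors by $r^{-c_{m-i}}$, so any member with genuine link dependence down to $r=0$ fails (5), and the cure is to make the members link-constant near the vertex. Your damping $\eta(r)\Phi(z)+(1-\eta(r))\Phi(z_0)$ does this while preserving the sum and the continuity across $Y$, and your appeal to compactness of $L_Y$ (so that full small cones $U'\times C_\delta(L_Y)$ form a neighbourhood basis of points of $Y$, hence link-constant members remain subordinate to an arbitrary cover) is precisely the point that makes the damping legitimate. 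The induction on $\depth$ to handle the strata sitting inside $L_Y$ is also the right organizing principle.

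What is left under-specified is the global assembly, and you should not wave it away with ``assembling the model functions\ldots produces the desired family.'' Your chart-wise families each sum to $1$ on their own distinguished chart, so on overlaps of charts the naive union over-counts and is not a partition of unity. The standard repair is to use your model functions only as bump functions: for each chart $V_j$ of the locally finite refinement produce one function $f_j$ (continuous on $X$, smooth on $\reg(X)$, compactly supported in $V_j$, link-constant near the vertex, with $\|\Id f_j\|_{\IL^{\infty}\Omega^1(\reg(X),g)}<\infty$) such that $\sum_j f_j\geq 1$, and then set $\lambda_j:=f_j/\sum_k f_k$. One must then check that normalization does not destroy (5); it does not, because $\supp f_j$ is compact, only finitely many $f_k$ are nonzero there, $\sum_k f_k$ is continuous and bounded below by $1$, and the quotient rule then gives a finite $C_j$ on $\supp f_j$ and $\lambda_j\equiv 0$ elsewhere. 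With that step written out (and with the remark that the per-member bound in (5) is global because each member is supported in a single chart where your estimate is uniform), your argument is a complete proof of the statement the paper takes from \cite{BYOU}.
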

\begin{proof}
See for instance \cite{BYOU} Prop. 3.2.2.
\end{proof}

Now we are in position to prove Theorem \ref{straticom}.
\begin{proof}
First of all we remark that it is enough to show that $(\reg(X),g)$ is $q$-parabolic. For the remaining $z\in [1,q)$ the statement follows by the fact that $\mu_g(\reg(X))<\infty$. The proof is given by induction on $\depth(X)$. If $\depth(X)=0$ then $X$ is a smooth compact manifold and therefore the theorem holds. Assume now that $\depth(X)=b$ and that the theorem holds in the case $\depth(X)\leq b-1$. This step of the  proof is \emph{divided in two parts}: in the first  we construct a local model of our desired sequence. In the second part  we then patch together these local models in order to get a suitable sequence of Lipschitz  functions with compact support. 
Let $Y$ be a singular stratum of $X$ of dimension $i$ and let $L_Y$, $\pi_Y$ and $\rho_Y$ as in Def. \ref{thom}. Let $p\in Y$ and let $U_p$ be an open neighborhood of $p$ in $Y$ such that we have an isomorphism $\phi:\pi_Y^{-1}(U_p)\rightarrow U_p\times C(L_Y)$ which satisfies \eqref{yhn}.  In particular  we know that $(\phi^{-1})^{*}(g|_{\pi_{Y}^{-1}(U)\cap \reg(X)})\sim  \Id r^2+h_{U}+r^{2c_{m-i}}g_{L_{Y}}$ and that $g_{L_{Y}}$ is a $(c_2,...,c_{m-i-1})$- iterated edge metric on $\reg(L_{Y})$. Clearly $\depth(L_Y)\leq b-1$. We can reformulate \eqref{condition1} and \eqref{condition2} respectively in the following way 
\begin{equation}
\label{condition3}
c_{\cod(Y)}(\cod(Y)-1)\geq q-1\  \text{for every}\ Y\subset \sing(X)
\end{equation}
\begin{equation}
\label{condition 4}
c_{\cod(Y)}(\cod(Y)-1-q)>-1\  \text{for every}\ Y\subset \sing(X)\ \text{with}\ \depth(Y)>1.
\end{equation}
By the fact that  $\phi:\pi_Y^{-1}(U_p)\rightarrow U_p\times C(L_Y)$ is a stratified isomorphism  we have $\phi(U_p)=U_p\times \ver(C(L_Y))$, where $\ver(C(L_Y))$ is the vertex of $C(L_Y)$, $\phi(\reg(\pi_Y^{-1}(U_p)))=U_p\times \reg(C(L_Y))$ and finally if $Z$ is a singular stratum of $X$ such that $Z\cap \pi_{Y}^{-1}(U_p)\neq \emptyset$ then $\phi(Z\cap \pi_{Y}^{-1}(U_p))=U_p\times (0,2)\times W$ where $W$ is a singular stratum in $L_Y$. In particular $\depth(Z)=\depth(W)$ and $\cod(Z)=\cod(W)$. On the other hand, starting with a singular stratum $W'\subset L_Y$, we can find a singular stratum $Z'$ of $X$ such that $Z'\cap \pi_{Y}^{-1}(U_p)\neq \emptyset$,  $\phi(Z'\cap \pi_{Y}^{-1}(U_p))=U_p\times (0,2)\times W'$, $\depth(Z')=\depth(W')$ and $\cod(Z')=\cod(W').$
This implies that on $L_Y$,  with respect to the  $(c_2,...,c_{m-i-1})$-iterated edge metric  $g_{L_Y}$, we have 
\begin{equation}
\label{condition5} 
c_{\cod(W)}(\cod(W)-1)\geq q-1\ \text{for every }\ W\subset \sing(L_Y)
\end{equation}
\begin{equation}
\label{condition6}
c_{\cod(W)}(\cod(W)-1-q)>-1 \   \text{for every }\ W\subset \sing(L_Y)\ \text{with}\ \depth(W)>1.
\end{equation}
We are  therefore in the position  to use  the inductive hypothesis and hence   we can conclude that $(\reg(L_Y),g_{L_Y})$ is $q$-parabolic.
 Let $\{\beta_{L_Y,n}\}$ be a sequence of compactly supported Lipschitz functions that makes $(\reg(L_Y),g_{L_Y})$ $q$-parabolic. If $\depth(Y)=1$ this means that $L_Y$ is  a  smooth compact manifold and $g_{L_Y}$ is a smooth Riemannian metric on $L_Y$. In this case we will always use the constant sequence $\{1\}$.
In order to pursue our aim we need now to define a suitable sequence of cut-off functions on $U_p\times C(L_Y)$. Let $\epsilon_n:=\frac{1}{n^2}$ and $\epsilon'_n:=\mathrm{e}^{-\frac{1}{\epsilon_n^2}}=\mathrm{e}^{-n^4}$. 
On $U_p\times C(L_Y)$ consider the following sequence of functions:
 \begin{equation}
\label{cutstratif}
\gamma_{U_p,n}:= \left\{
\begin{array}{llll}
1 & r\geq \epsilon_n\ \text{on}\ U_p\times C(L_Y)\\
(\frac{r}{\epsilon_n})^{\epsilon_n} & 2\epsilon'_n\leq r\leq \epsilon_n\ \text{on}\ U_p\times C(L_Y)\\
(\frac{2\epsilon'_n}{\epsilon_n})^{\epsilon_n}(\frac{r}{\epsilon'_n}-1) &  \epsilon'_n\leq r\leq 2\epsilon'_n\ \text{on}\ U_p\times C(L_Y)\\
0 & 0\leq r\leq \epsilon'_n\ \text{on}\ U_p\times C(L_Y)
\end{array}
\right.
\end{equation}
For $\Id \gamma_{U_p,n}|_{\reg(U\times C(L_Y))}$ we have the following estimate:
 \begin{equation}
\label{esdif}
|\Id \gamma_{U_p,n}|_{\reg(U\times C(L_Y))}|_{g^*}\leq  \left\{
\begin{array}{llll}
0 & r\geq \epsilon_n\ \text{on}\ U_p\times C(L_Y)\\
(\frac{r}{\epsilon_n})^{\epsilon_n-1} & 2\epsilon'_n\leq r\leq \epsilon_n\ \text{on}\ U_p\times C(L_Y)\\
(\frac{2\epsilon'_n}{\epsilon_n})^{\epsilon_n}(\frac{1}{\epsilon'_n}) &  \epsilon'_n\leq r\leq 2\epsilon'_n\ \text{on}\ U_p\times C(L_Y)\\
0 & 0\leq r\leq \epsilon'_n\ \text{on}\ U_p\times C(L_Y)
\end{array}
\right.
\end{equation}
where $|\bullet |_{g^*}$ in \eqref{esdif} is the pointwise norm that $\Id r^2+h_{U_p}+r^{2c_{m-i}}g_{L_Y}$ induces on $\mathrm{T}^*(\reg(U_p\times C(L_Y)))$. We want to show that 
\begin{equation}
\label{limiting}
\lim_{n\rightarrow \infty}\|\Id \gamma_{U_p,n}|_{\reg(U_p\times C(L_Y))}\|_{\IL^q\Omega^{1}(\reg(U_p\times C(L_Y)), dr^2+h_{U_p}+r^{2c_{m-i}}g_{L_Y})}=0.
\end{equation}
To this aim, using \eqref{esdif}, we have 
\begin{align}
\label{newver}
\|\Id \gamma_{U_p,n}|_{\reg(U_p\times C(L_Y))}\|^q_{\IL^q\Omega^{1}(\reg(U_p\times C(L_Y)), \Id r^2+h_{U_p}+r^{2c_{m-i}}g_{L_Y})}\leq\\\nn \int_{\epsilon'_{n}}^{2\epsilon'_{n}}\int_{U_p}\int_{\reg(L_Y)}\left(\frac{2\epsilon'_n}{\epsilon_n}\right)^{q\epsilon_n}\left(\frac{1}{\epsilon'_n}\right)^qr^{c_{m-i}(m-i-1)}\Id\mu_r\Id \mu_{h_{U_p}} \Id \mu_{g_{L_Y}}+\\\nn +\int_{2\epsilon'_{n}}^{\epsilon_{n}}\int_{U_p}\int_{\reg(L_Y)}\left(\frac{r}{\epsilon_n}\right)^{q\epsilon_n-q}r^{c_{m-i}(m-i-1)}\Id\mu_r\Id \mu_{h_{U_p}} \Id \mu_{g_{L_Y}}
\end{align}
For the first term on the right hand side of \eqref{newver} we have
\begin{align}
\label{onevolest}
&\int_{\epsilon'_{n}}^{2\epsilon'_{n}}\int_{U_p}\int_{\reg(L_Y)}\left(\frac{2\epsilon'_n}{\epsilon_n}\right)^{q\epsilon_n}\left(\frac{1}{\epsilon'_n}\right)^qr^{c_{m-i}(m-i-1)}\Id\mu_r\Id \mu_{h_{U_p}} \Id \mu_{g_{L_Y}}\\\nn
&=\mu_{h_{U_p}}(U_p)\mu_{g_{L_Y}}( \reg(L_Y))\int_{\epsilon'_{n}}^{2\epsilon'_{n}}\left(\frac{2\epsilon'_n}{\epsilon_n}\right)^{q\epsilon_n}\left(\frac{1}{\epsilon'_n}\right)^qr^{c_{m-i}(m-i-1)}\Id\mu_r\\\nn
&= \frac{\mu_{h_{U_p}}(U_p)\mu_{g_{L_Y}}( \reg(L_Y))}{c_{m-i}(m-i-1)+1}\left(\frac{2\epsilon'_n}{\epsilon_n}\right)^{q\epsilon_n}\left(\frac{1}{\epsilon'_n}\right)^q\left((2\epsilon'_n)^{c_{m-i}(m-i-1)+1}-(\epsilon'_n)^{c_{m-i}(m-i-1)+1}\right)\\\nn
&= \frac{\mu_{h_{U_p}}(U_p)\mu_{g_{L_Y}}( \reg(L_Y))}{c_{m-i}(m-i-1)+1}(2n^2\mathrm{e}^{-n^4})^{qn^{-2}}\mathrm{e}^{qn^4}\mathrm{e}^{-n^4(c_{m-i}(m-i-1)+1)}(2^{c_{m-i}(m-i-1)+1}-1)\\\nn
&=:\mu_{h_{U_p}}(U_p)\mu_{g_{L_Y}}(\reg(L_Y))a_{n,q}. 
\end{align}
It is straighforward to see that  $\lim_{n\rightarrow \infty}a_{n,q}=0$.
For the second term on the the right hand side of \eqref{newver} we have
\begin{align}
\label{secvolest}
& \int_{2\epsilon'_{n}}^{\epsilon_{n}}\int_{U_p}\int_{\reg(L_Y)}\left(\frac{r}{\epsilon_n}\right)^{q\epsilon_n-q}r^{c_{m-i}(m-i-1)}\Id\mu_r\Id \mu_{h_{U_p}} \Id \mu_{g_{L_Y}}\\\nn
&=(\frac{1}{\epsilon_n})^{q\epsilon_n-q}\mu_{h_{U_p}}(U_p)\mu_{g_{L_Y}}(\reg( L_Y))\int_{2\epsilon'_{n}}^{\epsilon_{n}}r^{q\epsilon_n-q+c_{m-i}(m-i-1)}\Id\mu_r\\\nn
&= \frac{\mu_{h_{U_p}}(U_p)\mu_{g_{L_Y}}(\reg( L_Y))}{q\epsilon_n-q+1+c_{m-i}(m-i-1)}\left(\frac{1}{\epsilon_n}\right)^{q\epsilon_n-q}(\epsilon_n^{q\epsilon_n-q+1+c_{m-i}(m-i-1)}-(2\epsilon'_n)^{q\epsilon_n-q+1+c_{m-i}(m-i-1)})\\\nn
&= \frac{\mu_{h_{U_p}}(U_p)\mu_{g_{L_Y}}( \reg(L_Y))}{qn^{-2}-q+1+c_{m-i}(m-i-1)}(n^2)^{qn^{-2}-q}\\\nn
&\>\>\>\>\>\>\times\left(\left(\f{1}{n^2}\right)^{qn^{-2}-q+1+c_{m-i}(m-i-1)}-(2\mathrm{e}^{-n^4})^{qn^{-2}-q+1+c_{m-i}(m-i-1)}\right)\\ \nn
&=:\mu_{h_{U_p}}(U_p)\mu_{g_{L_Y}}(\reg(L_Y))b_{n,q}.
\end{align}
Also in this case $\lim_{n\rightarrow \infty}b_{n,q}=0$. Hence we proved that \eqref{limiting} holds. Define now a sequence on $U_p\times C(L_Y)$ as 
\begin{equation}
\label{sulcono}
\alpha_{U_p,n}:=\gamma_{U_p,n}\beta_{L_{Y},n}.
\end{equation}
 We clearly have $\lim_{n\rightarrow \infty}\alpha_{U_p,n}(x)=1$ for every $x\in U_p\times C(L_Y)$. Over $U_p\times \reg(C(L_Y))$, for $\Id(\alpha_{U_p,n})$, we have  
$$
\Id\alpha_{U_p,n}=\gamma_{U_p,n}\Id\beta_{U_p,n}+\beta_{U_p,n}\Id\gamma_{U_p,n}
$$ and therefore 
\begin{align}
\nn & \|\Id \alpha_{U_p,n}\|_{\IL^q\Omega^{1}(\reg(U_p\times C(L_Y)), dr^2+h_{U_p}+r^{2c_{m-i}}g_{L_Y})}  \\
\nn & \leq \|\gamma_{U_p,n}\Id\beta_{U_p,n}\|_{\IL^q\Omega^{1}(\reg(U_p\times C(L_Y)), \Id r^2+h_{U_p}+r^{2c_{m-i}}g_{L_Y})}+\\ \nn & +\|\beta_{U_p,n}\Id\gamma_{U_p,n}\|_{\IL^q\Omega^{1}(\reg(U_p\times C(L_Y)), \Id r^2+h_{U_p}+r^{2c_{m-i}}g_{L_Y})}
\end{align}

According to \eqref{limiting} we have 
$$\lim_{n\rightarrow \infty}\|\beta_{U_p,n}\Id\gamma_{U_p,n}\|_{\IL^q\Omega^{1}(\reg(U_p\times C(L_Y)), \Id r^2+h_{U_p}+r^{2c_{m-i}}g_{L_Y})}=0.$$ 
For  $\gamma_{U_p,n}\Id\beta_{U_p,n}$ we argue in this way. If $\depth(Y)=1$ then $\beta_{U_p,n}=1$ for each $n\in \mathbb{N}$ and clearly
$$\lim_{n\rightarrow \infty}\|\gamma_{U_p,n}\Id\beta_{U_p,n}\|_{\IL^q\Omega^{1}(\reg(U_p\times C(L_Y)), \Id r^2+h_{U_p}+r^{2c_{m-i}}g_{L_Y})}=0.$$
If $\depth(Y)>1$ then we have 
 \begin{align}
\nn & \lim_{n\rightarrow \infty}\|\gamma_{U_p,n}\Id\beta_{U_p,n}\|^q_{\IL^q\Omega^{1}(\reg(U_p\times C(L_Y)), \Id r^2+h_{U_p}+r^{2c_{m-i}}g_{L_Y})} \leq \\
\nn & \lim_{n\rightarrow \infty}\|\Id\beta_{U_p,n}\|^q_{\IL^q\Omega^{1}(\reg(U_p\times C(L_Y)), \Id r^2+h_{U_p}+r^{2c_{m-i}}g_{L_Y})}=\\
\nn &\lim_{n\rightarrow \infty}\mu_h(U_p)^q\|\Id\beta_{U_p,n}\|^q_{\IL^q\Omega^1(\reg(L_Y),g_{L_Y})}\int_0^1r^{c_{m-i}(m-i-1-q)}\Id r=0
\end{align}
because $\int_0^1r^{c_{m-i}(m-i-1-q)}\Id r<\infty$. Summarizing we proved that 
\begin{equation}
\label{limit2}
\lim_{n\rightarrow \infty}\|\Id \alpha_{U_p,n}|_{\reg(U_p\times C(L_Y))}\|_{\IL^q\Omega^{1}(\reg(U_p\times C(L_Y)), \Id r^2+h_{U_p}+r^{c_{m-i}}g_{L_Y})}=0.
\end{equation}
Consider now the following   sequence $\{\psi_{U_p,n}\}$ on $\pi_Y^{-1}(U_p)$ defined  as 
\begin{equation}
\label{seqseq}
\psi_{U_p,n}:=\alpha_{U_p,n}\circ \phi^{-1}.
\end{equation}
We have again  $\lim_{n\rightarrow \infty}\alpha_{U_p,n}(x)=1$ for every $x\in \pi_Y^{-1}(U_p)$ and by \eqref{yhn} and \eqref{limit2}  we get 
\begin{equation}
\label{limit3}
\lim_{n\rightarrow \infty}\|\Id \psi_{U_p,n}|_{\pi^{-1}_Y(U_p)\cap \reg(X)}\|_{\IL^q\Omega^{1}(\pi^{-1}_Y(U_p)\cap \reg(X), g|_{\pi^{-1}_Y(U_p)\cap \reg(X)})}=0.
\end{equation}
This concludes the \emph{first part of the proof}. In fact over any open subset of $X$ satisfying \eqref{yhn} we constructed our desired sequence given by $\{\psi_{U_p,n}\}$. Now we begin \emph{the second part of the proof}. As previously explained, here the goal is gluing together the "local"  sequences $\{\psi_{U_p,n}\}$ in order to get a globally defined sequence of Lipschitz functions with compact support which makes $(\reg(X),g)$ $q$-parabolic. To this aim we need first to introduce a suitable partition of unity. Consider now the following closed subsets of $X$,
$$
K:=\overline{\bigcup_{Y\subset \sing(X)} T_Y\cap \rho_Y^{-1}([0,1))},\ \Omega:=X\setminus \left(\bigcup_{Y\subset \sing(X)} T_Y\cap \rho_Y^{-1}([0,1))\right).
$$
By the fact that $X$ is compact we can find a finite set of points $\mathfrak{T}:=\{p_1,...,p_s\}\subset \sing(X)$ such that the following properties are satisfied. For each $p_i$ there is an open neighborhood $U_{p_i}\subset Y_i$, the singular stratum containing $p_i$, such that \eqref{yhn} holds and such that $\{\pi_Y^{-1}(U_{p_i})\cap K, i=1,...,s\}$ is a finite  open cover of $K$.  
By construction  $\Omega$ is contained in $\reg(X)$. Let now $A\subset \reg(X)$ be an open subset such that $\Omega\subset A$.   In this way we get that 
$$
\mathfrak{M}:=\{\pi_{Y_1}^{-1}(U_{p_1}),...,\pi_{Y_s}^{-1}(U_{p_s}), A\}
$$
 is a finite open cover of $X$. According to Prop. \ref{partition} let $\mathfrak{L}:=\{\lambda_{\alpha}\}_{\alpha\in A}$ be a \emph{finite} partition of unity with bounded differential subordinated to $\mathfrak{M}$. Let us consider  the finite set of functions $\{\tau_1,..,\tau_s,\tau_A\}$ where $\tau_i$,  $i=1,...,s$, is defined as the sum of all functions $\lambda_{\alpha}\in \mathfrak{L}$ having support in $\pi_{Y_i}^{-1}(U_{p_i})$
and $\tau_A$ is defined as the sum of all functions $\lambda_{\alpha}\in \mathfrak{L}$ having support in $A$. Now, for each $\pi_{Y_i}^{-1}(U_{p_i})$, consider the sequence $\{\psi_{U_{p_i},n}\}$ as defined in \eqref{seqseq}. Finally define the sequence $\{\chi_n\}$ as 
\begin{equation}
\label{loureed}
\chi_n:=\tau_1\psi_{U_{p_1},n}+...+\tau_s\psi_{U_{p_s},n}+\tau_A.
\end{equation} 
We want to show that $\{\chi_n|_{\reg(X)}\}$ makes $(\reg(X),g)$ $q$-parabolic.
By construction $\chi_{n}|_{\reg(X)}$ is locally  Lipschitz. Let now $q\in \sing(X)$ and let $i\in\{1,...,s\}$. If $q\notin \supp(\tau_i)$ then $\tau_i\psi_{U_{p_i},n}$ is null on a neighborhood of $q$. If $q\in \supp(\tau_i)$ then $q\in\pi_{Y_i}^{-1}( U_{p_i})$ and using \eqref{yhn} we get $\phi(q)=(u,[r,y])$ with $u\in U_{p_i}$ and $[r,y]\in \sing(C(L_Y))$. We have $(\tau_i\psi_{U_{p_i},n})\circ \phi^{-1}=(\tau_i\circ \phi^{-1})\alpha_{U_{p_i},n}$ where $\alpha_{U_{p_i},n}$ is defined in \eqref{sulcono}. By construction $(\tau_i\circ \phi^{-1})\alpha_{U_{p_i},n}$ is null on a neighborhood (which depends on $n$) of $(u,[r,y])$ because $\tau_i\circ \phi^{-1}$ has compact support in $U\times C(L_Y)$, $\alpha_{U_{p_i},n}=\gamma_{U_{p_i},n}\beta_{U_{p_i},n}$, $\gamma_{U_{p_i},n}$ is null on a neighborhood of $\ver(C(L_Y))$ in $C(L_Y)$ and $\beta_{U_{p_i},n}$ is null on a neighborhood of $\sing(L_Y)$ in $L_Y$. Eventually this tells us that  $\chi_{n}$ is null on a neighborhood (which depends on $n$) of $\sing(X)$ because we have just shown that every single term on the right hand side of \eqref{loureed} is null on a neighborhood (which depends on $n$) of $\sing(X)$. Therefore each $\chi_{n}|_{\reg(X)}$ is Lipschitz with compact support.
Clearly we have $0\leq \chi_n\leq 1$ and $\lim_{n\rightarrow \infty}\chi_n|_{\reg(X)}=1$ pointwise. For $\|\Id\chi_n|_{\reg(X)}\|_{\IL^q\Omega^1(\reg(X),g)}$ we argue as follows: Over $\reg(X)$  we have 
\begin{equation}
\label{wall}
\Id\chi_n=\tau_1\Id\psi_{U_{p_1},n}+\psi_{U_{p_1},n}\Id\tau_1+...+\tau_s\Id\psi_{U_{p_s},n}+\psi_{U_{p_s},n}\Id\tau_s +\Id\tau_{A}.
\end{equation}
Therefore 
\begin{align}
\label{limitzero}
 \|\Id\chi_n|_{\reg(X)}\|_{\IL^q\Omega^1(\reg(X),g)}\leq  \|\tau_1\Id\psi_{U_{p_1},n}+...+\tau_s\Id\psi_{U_{p_s},n}\|_{\IL^q\Omega^1(\reg(X),g)}+\\\nn +\|\psi_{U_{p_1},n}\Id\tau_1+...+\psi_{U_{p_s},n}\Id\tau_s)+\Id\tau_A\|_{\IL^q\Omega^1(\reg(X),g)}
\end{align}
For the right hand side of \eqref{limitzero} we have 
\begin{align*}
&\|\tau_1\Id\psi_{U_{p_1},n}+...+\tau_s\Id\psi_{U_{p_s},n}\|_{\IL^q\Omega^1(\reg(X),g)}\\
&\leq  \|\tau_1\Id\psi_{U_{p_1},n}\|_{\IL^q\Omega^1(\reg(X),g)}+...+\|\tau_s\Id\psi_{U_{p_s},n}\|_{\IL^q\Omega^1(\reg(X),g)}.
\end{align*}

Using \eqref{limit3}  we get  for each $i=0,...,s$
\begin{equation}
\label{zerolimit}
\lim_{n\rightarrow \infty} \|\tau_1\Id\psi_{U_{p_i},n}\|_{\IL^q\Omega^1(\reg(X),g)}=0.
\end{equation}
For $$\psi_{U_{p_1},n}\Id\tau_1+...+\psi_{U_{p_s},n}\Id\tau_s+\Id\tau_A$$ we have 
\begin{align}
&\lim_{n\rightarrow \infty} \|\psi_{U_{p_1},n}\Id\tau_1+...+\psi_{U_{p_s},n}\Id\tau_s+\Id\tau_A\|_{\IL^q\Omega^1(\reg(X),g)}=\\\nn &\|\lim_{n\rightarrow \infty}(\psi_{U_{p_1},n}\Id\tau_1+...+\psi_{U_{p_s},n}\Id\tau_s+\Id\tau_A)\|_{\IL^q\Omega^1(\reg(X),g)}=\\\nn &\|\Id\tau_1+...+\Id\tau_s+\Id\tau_A\|_{\IL^q\Omega^1(\reg(X),g)}=\\\nn &\|\Id(\tau_1+...+\tau_s+\tau_A)\|_{\IL^q\Omega^1(\reg(X),g)}=\|\Id 1\|_{\IL^q\Omega^1(\reg(X),g)}=0.
\end{align}
In conclusion the sequence $\{\chi_n|_{\reg(X)}\}$  makes $(\reg(X),g)$ $q$-parabolic and so the proof of the theorem is completed.
\end{proof}

We close this section by adding some immediate consequences of Theorem \ref{straticom}. 

\begin{Remark}
 If  $(c_2,...,c_{m})=(1,...,1)$ then $(\reg(X),g)$ is $2$-parabolic and thus stochastically complete. In fact, then \eqref{condition1}, \eqref{condition2} becomes 
\begin{equation}
\label{condition4}
 \left\{
\begin{array}{ll}
\cod(Y)\geq 2 &\ \text{if}\ \depth(Y)=1\\
\cod(Y)>2 &\ \text{if}\ \depth(Y)>1
\end{array}
\right.
\end{equation} 
and, according to the Definition \eqref{thom}, \eqref{condition4} is clearly satisfied by every singular stratum $Y\subset \sing(X)$. These metrics have been considered in \cite{ALMP}.
\end{Remark}

A particular case of smoothly Thom-Mather-stratified pseudomanifolds is provided by \emph{manifolds with conical singularities}. A topological space $X$ is a manifold with conical singularities, if it is a metrizable, locally compact, Hausdorff space such that there exists a sequence of points $\{p_{1},...,p_{n},...\}\subset X$ which satisfies the following properties:
\begin{enumerate}
\item $X\setminus  \{p_{1},...,p_{n},...\}$ is a smooth open manifold.
\item For each $p_{i}$ there exists an open neighborhood $U_{p_i}$, a compact smooth manifold $L_{p_i}$ and a  map $\chi_{p_i}:U_{p_i}\rightarrow C_{2}(L_{p_i})$ such that $\chi_{p_i}(p_i)=v$ and 
$$ 
\chi_{p_i}|_{U_{p_i}\setminus  \{p_{i}\}}:U_{p_i}\setminus  \{p_i\}\longrightarrow L_{p_i}\times (0,2)
$$
is a smooth diffeomorphism. 
\end{enumerate}
Using the notations of Def. \ref{thom} this means that 
$$
X=X_n\supset X_{n-1}=X_{n-2}=...=X_1=X_0.
$$
In this case a $\hat{c}$-\emph{iterated edge metric} $g$ on $\reg(X)$ is a Riemannian metric on $\reg(X)$ with the following property: for each  conical point $p_i$ there exists a map $\chi_{p_i}$, as defined above,  such that 
\begin{equation}
\label{pianello}
(\chi_{p_i}^{-1})^*(g|_{U_{p_{i}}})\sim \Id r^2+r^{2c}h_{L{p_{i}}}
\end{equation}
 where $h_{L{p_{i}}}$ is a Riemannian metric on $L_{p_{i}}$ and $c>0$. When $c=1$, \eqref{pianello} is called conic metric while, when $c>1$, \eqref{pianello} is called horn metric.  Applying Theorem \ref{straticom} we get the following corollary.

\begin{Corollary}
Let $X$ be compact manifold with isolated conical singularities, and let $g$ be a smooth Riemannian metric on $\reg(X)$ which  satisfies \eqref{pianello}. Assume that $c(n-1)\geq q-1$. Then $(\reg(X),g)$ is $s$-parabolic for all $s\in[1,q]$. In particular, conic metrics and horn metrics are always $2$-parabolic.
\end{Corollary}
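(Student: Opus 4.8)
The plan is to deduce the statement directly from Theorem \ref{straticom} by recognizing a compact manifold with isolated conical singularities as a very special compact smoothly Thom-Mather-stratified pseudomanifold, and the metric satisfying \eqref{pianello} as a particular $\hat{c}$-iterated edge metric. First I would record that, since $X$ is compact, the set of conical points is in fact finite, say $\{p_1,\dots,p_s\}$, and that the associated filtration is $X = X_n \supset X_{n-1}=X_{n-2}=\dots=X_0$ with $X_0=\{p_1,\dots,p_s\}=\sing(X)$. Thus $m:=\dim(X)=n$, every singular stratum $Y$ is one of the points $p_i$, so that $\dim(Y)=0$ and, because the link $L_{p_i}$ is a smooth compact manifold, $\depth(Y)=1$.

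Next I would match \eqref{pianello} against the defining relation \eqref{yhn} of Definition \ref{iter}. For a singular stratum $Y=\{p_i\}$ one has $i=\dim(Y)=0$, hence $m-i=n$, the base neighborhood $U$ is the point $p_i$ itself (so $h_U$ is trivial), and the link metric $g_{L_Y}=h_{L_{p_i}}$ is a smooth metric on the smooth manifold $L_{p_i}$, i.e. a $(c_2,\dots,c_{m-i-1})$-iterated edge metric in the trivial depth-$0$ sense. Consequently \eqref{yhn} reduces precisely to $(\chi_{p_i}^{-1})^*(g|_{U_{p_i}})\sim \Id r^2 + r^{2c_{m-i}}g_{L_Y}$, which is \eqref{pianello} once we set $c_{m-i}=c_n=c$. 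Hence $g$ is a $\hat{c}$-iterated edge metric for a tuple $\hat{c}=(c_2,\dots,c_n)$ whose only relevant entry is $c_n=c$.

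It then remains to verify the hypotheses of Theorem \ref{straticom}. Condition \eqref{condition2} is imposed only on singular strata of depth strictly greater than $1$; since every conical point has depth exactly $1$, this condition is vacuous here. Condition \eqref{condition1}, evaluated on $Y=\{p_i\}$ with $i=0$ and $m=n$, reads $c_{m-i}(m-i-1)=c_n(n-1)=c(n-1)\geq q-1$, which is exactly the standing assumption. Theorem \ref{straticom} therefore applies and yields that $(\reg(X),g)$ is $z$-parabolic for every $z\in[1,q]$, which is the asserted $s$-parabolicity for $s\in[1,q]$.

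Finally, for the last assertion I would specialize to $q=2$, so that the required inequality becomes $c(n-1)\geq 1$. A manifold with genuine conical singularities has dimension $n\geq 2$ (otherwise the cone points would be regular), so $n-1\geq 1$; for a conic metric $c=1$ this gives $c(n-1)=n-1\geq 1$, and for a horn metric $c>1$ it gives $c(n-1)>n-1\geq 1$. In both cases $c(n-1)\geq 1=q-1$, so the first part yields $2$-parabolicity. The argument involves essentially no serious obstacle: the only point demanding care is the degenerate zero-dimensional base stratum in \eqref{yhn}, together with the observation that the smoothness of the links forces $\depth(Y)=1$ and hence makes \eqref{condition2} irrelevant.
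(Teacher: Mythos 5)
Your proposal is correct and follows exactly the route the paper intends: the paper gives no separate proof beyond the remark that the corollary follows by ``applying Theorem \ref{straticom}'', and your verification that a manifold with isolated conical singularities is a depth-one stratified pseudomanifold whose only singular strata are points (so that \eqref{condition2} is vacuous and \eqref{condition1} reduces to $c(n-1)\geq q-1$) is precisely the intended specialization. The concluding observation that $n\geq 2$ forces $c(n-1)\geq 1$ for conic and horn metrics is also the right way to obtain the $2$-parabolicity statement.
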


The next propositions provide other applications of Theorem \ref{straticom}.

\begin{Proposition}
Let $V\subset \mathbb{R}^m$ be an irreducible compact analytic surface with isolated singularities.  Let $g$ be the Riemannian metric on $\reg(V)$, the regular part of $V$, induced by the standard Euclidean metric on $\mathbb{R}^m$. Then $(\reg(V),g)$ is $q$-parabolic for all $q\in [1,2]$.
\end{Proposition}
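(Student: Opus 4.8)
The plan is to exhibit $(\reg(V),g)$ as the regular part of a compact smoothly Thom-Mather-stratified pseudomanifold equipped with a $\hat c$-iterated edge metric, and then to invoke Theorem \ref{straticom} with integrability parameter $2$. First I would fix the stratified structure. Since $V$ is a compact real analytic surface, its singular locus $\sing(V)$ is a finite set of points $\{p_1,\dots,p_s\}$, and $\reg(V)=V\setminus\sing(V)$ is a smooth open surface which is connected and dense because $V$ is irreducible. Taking as strata $\reg(V)$ together with the $0$-dimensional strata $\{p_j\}$, the local conic structure theorem for analytic sets supplies, near each $p_j$, control data $(T_{p_j},\pi_{p_j},\rho_{p_j})$ realizing a small punctured neighborhood as a cone over the link $L_{p_j}:=V\cap S_\varepsilon(p_j)$, which for $\varepsilon$ small is a smooth compact $1$-manifold (a disjoint union of circles). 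In the notation of Definition \ref{thom} this yields the filtration $X=X_2\supset X_1=X_0=\{p_1,\dots,p_s\}$, so that $m=2$, condition (vi) holds, and every singular stratum $Y=\{p_j\}$ has $\dim(Y)=0$ and $\depth(Y)=1$.

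The crux, and the step I expect to be the \emph{main obstacle}, is to show that near each $p_j$ the Euclidean induced metric $g$ is quasi-isometric to an edge metric
$$\Id r^2+r^{2c_j}\,g_{L_{p_j}},\qquad c_j\geq 1,$$
with $r=|x-p_j|$ and $g_{L_{p_j}}$ a smooth metric on the link. The existence of such a cone-type quasi-isometry is a structural fact about two-dimensional analytic singularities; I would establish it by using the (sub)analytic local cone structure to parametrize a punctured neighborhood by $L_{p_j}\times(0,\varepsilon)$ with $r$ as radial coordinate and estimating the metric coefficients, the link being a smooth $1$-manifold so that $g_{L_{p_j}}$ is an ordinary smooth metric. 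To obtain the decisive lower bound $c_j\geq 1$ I would argue by volume comparison: the volume form of the model metric is comparable to $r^{c_j}\,\Id r\,\Id\mu_{g_{L_{p_j}}}$, whence $\mu_g(\reg(V)\cap B_\rho(p_j))\asymp\rho^{c_j+1}$; on the other hand the Riemannian volume of the induced metric coincides with the $2$-dimensional Hausdorff measure, and finiteness of the local density of analytic sets gives $\mathcal{H}^2(V\cap B_\rho(p_j))\leq C\rho^{2}$. Comparing the two, $\rho^{c_j+1}\lesssim\rho^2$ as $\rho\to 0$ forces $c_j\geq 1$. (Equivalently, the coarea formula applied to the Lipschitz function $r$, together with $|\nabla^V r|\leq 1$, bounds $\int_0^\rho\mathrm{length}(L_s)\,\Id s$ by $\mathcal{H}^2(V\cap B_\rho)=O(\rho^2)$, and $\mathrm{length}(L_s)\asymp s^{c_j}$ again yields $c_j\geq 1$.) The heuristic behind $c_j\geq 1$ is that a singularity with $c_j<1$ would be metrically fatter than a cone, which cannot occur for the link of an analytic surface.

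With this structure in hand I would apply Theorem \ref{straticom} with its integrability parameter equal to $2$. For each singular stratum $Y=\{p_j\}$ one has $i=\dim(Y)=0$, so the left-hand side of \eqref{condition1} reads $c_j\cdot(m-i-1)=c_j\cdot 1=c_j\geq 1=2-1$, which holds by $c_j\geq 1$; and since $\depth(Y)=1$ the hypothesis \eqref{condition2} is vacuous. Should the exponents $c_j$ differ from point to point (as they may, for conical versus horn-type singularities), a single global tuple $\hat c$ need not literally exist; however the cut-off construction in the proof of Theorem \ref{straticom} is carried out locally and independently around each singular point before being patched by the partition of unity of Proposition \ref{partition}, so it goes through verbatim provided each $p_j$ satisfies $c_j\geq 2-1$. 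Hence $(\reg(V),g)$ is $2$-parabolic, and since $V$ is compact so that $\mu_g(\reg(V))<\infty$, the reduction at the start of the proof of Theorem \ref{straticom} upgrades this to $q$-parabolicity for every $q\in[1,2]$, which is the assertion.
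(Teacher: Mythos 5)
Your overall strategy is exactly the paper's: realize $V$ as a depth-one Thom--Mather stratified pseudomanifold with zero-dimensional singular strata, show the induced Euclidean metric is (locally) quasi-isometric to a horn metric $\Id r^2+r^{2c_j}g_{L_{p_j}}$ with $c_j\geq 1$, and feed this into Theorem \ref{straticom}; the verification of \eqref{condition1} with $i=0$, $m=2$, $q=2$, the vacuousness of \eqref{condition2}, the remark about non-uniform exponents being harmless because the cut-off construction is local, and the upgrade from $2$-parabolicity to $q\in[1,2]$ via finite volume are all correct and match what the paper (tersely) does.

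The gap is in the step you yourself flag as the crux. The paper's entire proof is ``combine Theorem 1.1 of \cite{DGR} with Theorem \ref{straticom}'': Grieser's theorem is precisely the statement that the germ of a singular real analytic surface at an isolated singular point, with the induced metric, is quasi-isometric to a finite union of metric horns $\Id r^2+r^{2\beta}\Id\theta^2$ with rational exponents $\beta\geq 1$. Your proposal assumes the \emph{existence} of a quasi-isometry to some model $\Id r^2+r^{2c_j}g_{L_{p_j}}$ and only then derives $c_j\geq 1$ by volume/density comparison. That a posteriori argument is fine, but it does not address the existence question, which is the nontrivial content: the local cone structure theorem for analytic sets produces a homeomorphism (via integrating a controlled vector field), not a bi-Lipschitz parametrization, and ``estimating the metric coefficients'' of such a parametrization is exactly what requires the hard work (Puiseux-type expansions/resolution arguments in \cite{DGR}); the inner metric geometry of real surface germs is not determined by the topological cone structure. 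A further small point your model elides: distinct connected components of a single link $L_{p_j}$ may carry different exponents, so the local model is a union of horns rather than a single cone $C(L_{p_j})$ with one exponent $c_j$; this is again harmless for the capacity estimate (treat each horn component separately), but it means the metric is not literally of the form \eqref{yhn} for any single $\hat c$. In short: correct scaffolding, correct use of Theorem \ref{straticom}, but the decisive local structure result is quoted in outline rather than proved, and it is exactly the external theorem the paper leans on.
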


\begin{proof}
The proposition follows combining Theorem 1.1 in \cite{DGR} with Theorem \ref{straticom}.
\end{proof}

Finally, we record a result concerning singular quotients. To this end, we recall that if $G$ is a compact Lie group acting isometrically on a smooth compact Riemannian manifold $(M,g)$, then $M/G$ canonically becomes a smoothly Thom-Mather-stratified pseudomanifold . Furthermore, with $\pi:M\rightarrow M/G$ the projection onto the orbit space, let $\pi_*g$ denote the smooth Riemannian metric on $\reg(M/G)$ which is induced by $g$ through $\pi$.

\begin{Proposition}
In the above situation, assume that the orbit space $M/G$ has no codimension one stratum. Then $(\reg(M/G),\pi_*g)$ is $q$-parabolic for all $q\in [1,2]$.
\end{Proposition}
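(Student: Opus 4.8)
The plan is to recognize $\pi_*g$ as a $\hat c$-iterated edge metric of type $\hat c=(1,\dots,1)$ on $\reg(M/G)$ and then to invoke Theorem \ref{straticom} with $q=2$. Granting the first point, $2$-parabolicity follows at once, and since $M/G$ is compact we have $\mu_{\pi_*g}(\reg(M/G))<\infty$, so the passage from $z=2$ to every $z\in[1,2]$ is automatic, exactly as in the reduction carried out at the very start of the proof of Theorem \ref{straticom}.

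To see that $\pi_*g$ is a $(1,\dots,1)$-iterated edge metric I would argue locally around an orbit by means of the slice theorem. Fixing $p\in M$, let $G_p$ be its isotropy group and $V:=T_p(Gp)^{\perp}$ the normal slice, carrying the orthogonal slice representation of $G_p$. The slice theorem provides a $G$-equivariant diffeomorphism of a tube around $Gp$ onto $G\times_{G_p}B_\varepsilon(V)$, hence a homeomorphism of the corresponding neighborhood in $M/G$ onto $B_\varepsilon(V)/G_p$. Passing to polar coordinates $V\setminus\{0\}\cong(0,\varepsilon)\times S(V)$, the Euclidean slice metric is $\Id r^2+r^2 g_{S(V)}$, so that on the regular part the induced quotient metric is quasi-isometric to $\Id r^2+h_U+r^2 g_{S(V)/G_p}$, where $h_U$ accounts for the smooth directions along the orbit-type stratum. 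This is precisely the local model \eqref{yhn} with exponent $c=1$ and link $L=S(V)/G_p$. Since $S(V)/G_p$ is again the orbit space of an isometric action of a compact group on a compact manifold, but of strictly smaller depth, an induction on the depth of the orbit space shows that $g_{S(V)/G_p}$ is itself a $(1,\dots,1)$-iterated edge metric; hence so is $\pi_*g$. (Alternatively this structural fact may be quoted directly, cf. the discussion in \cite{ALMP}.)

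It then remains to check the numerical hypotheses \eqref{condition1} and \eqref{condition2}. For $\hat c=(1,\dots,1)$ and $q=2$ they read, as recorded in the Remark following Theorem \ref{straticom}, $\cod(Y)\geq 2$ whenever $\depth(Y)=1$ and $\cod(Y)\geq 3$ whenever $\depth(Y)>1$. The hypothesis that $M/G$ has no codimension one stratum gives $\cod(Y)\geq 2$ for every singular stratum $Y$, settling \eqref{condition1}. For \eqref{condition2}, note that if $\depth(Y)>1$ then the link $L_Y$ is itself a compact pseudomanifold carrying a singular stratum $W$; since $L_Y$ has no codimension one stratum either, $\cod_{L_Y}(W)\geq 2$ and therefore $\dim(L_Y)\geq \dim(W)+2\geq 2$, whence $\cod(Y)=\dim(L_Y)+1\geq 3$. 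Thus both conditions hold, Theorem \ref{straticom} applies with $q=2$, and $(\reg(M/G),\pi_*g)$ is $z$-parabolic for every $z\in[1,2]$.

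The main obstacle is the first step. While the slice theorem pins down the diffeomorphism type of $M/G$ near each orbit, one must verify that the \emph{induced} metric $\pi_*g$ is genuinely quasi-isometric to the flat cone model $\Id r^2+r^2 g_{S(V)/G_p}$, not merely diffeomorphic to the cone. This requires comparing $g$ with the slice representation to first order along the zero section, controlling the error uniformly, and organizing the induction on depth so that the link metric inherits the $(1,\dots,1)$ type. Once this is in place, the remaining arithmetic is exactly the $\hat c=(1,\dots,1)$ specialization already isolated in the Remark following Theorem \ref{straticom}.
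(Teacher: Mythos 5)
Your proposal is correct and follows essentially the same route as the paper: the paper's proof consists precisely of the observation that $\pi_*g$ is quasi-isometric to a $(1,\dots,1)$-iterated edge metric --- quoted from \cite{Sj} rather than derived via the slice theorem as you sketch --- followed by an application of Theorem \ref{straticom}. Your verification of \eqref{condition1} and \eqref{condition2} matches the Remark following that theorem, and the structural fact you flag as the main obstacle is exactly what the paper outsources to \cite{Sj} (which is the reference to cite here, rather than \cite{ALMP}).
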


\begin{proof}
If $M/G$ has no codimension one stratum then $\pi_*g$ is quasi-isometric to a $\hat{c}$-iterated edge metric with $\hat{c}=(1,...,1).$ This is showed in \cite{Sj}. Now the claim follows from applying Theorem \ref{straticom}. 
\end{proof}

\section{Other singular spaces}\label{riem}

\subsection{Further preliminary results}

\noindent This subsection concerns the stability of $q$-parabolicity. First we will recall that $q$-parabolicity is preserved under quasi-isometry. Then we will show that in the setting of Hermitian manifolds, in order to preserve $2$-parabolicity, it is enough a weaker condition than the quasi-isometry.  These results are of fundamental importance, as stochastic completeness itself is not preserved under quasi-isometry, see for instance \cite{TLY}. \vspace{2mm}



We first recall the following result. A much thorough discussion can be found in  \cite{troyavo}.

\begin{Proposition}
\label{Stab}
Let $M$ be a smooth manifold. Assume that $g_1$ and $g_2$ are smooth Riemannian metrics on $M$ such that $(M,g_1)$ is $q$-parabolic for some $q<\infty$. Let $A$ be the strictly positive smooth vector bundle endomorphism given by 
$$
A:\IT M\longrightarrow  \IT M, \>g_1(AV_1 ,V_2):=\>g_2(V_1 ,V_2 ),\>\>\text{ $V_1,V_2\in \IT_x M$.}
$$
Let $|(A^{-1})^t|_{g^*_1}$ be the pointwise operator norm of $(A^{-1})^t\in\mathrm{End}(\IT^* M;g^*_1)$, and assume  
$$
\det(A)^{\frac{1}{2}}\cdot|(A^{-1})^t|^{\f{q}{2}}_{g^*_1} \in\IL^{\infty}(M) .
$$
Then $(M,g_2)$ is $q$-parabolic as well. 
\end{Proposition}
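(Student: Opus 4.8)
The plan is to reuse, verbatim, the cut-off sequence provided by the $q$-parabolicity of $(M,g_1)$ and to compare its $q$-energy with respect to $g_2$ with the one with respect to $g_1$, controlling the discrepancy pointwise by the hypothesis $\det(A)^{\frac12}|(A^{-1})^t|_{g_1^*}^{\frac q2}\in\IL^{\infty}(M)$. By Prop.~\ref{equivalence} applied to $(M,g_1)$, there is a sequence $\{\phi_n\}\subset\mathsf{C}^1_{\c}(M)$ with $0\le\phi_n\le1$, $\phi_n\to1$ uniformly on every compact set, and $\int_M|\Id\phi_n|_{g_1^*}^q\,\Id\mu_{g_1}\to0$. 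Since $M$ and its topology are unchanged, this very same sequence already fulfils the first two requirements in the second bullet of Prop.~\ref{equivalence} relative to $g_2$; hence the only thing left to establish is
\[
\int_M|\Id\phi_n|_{g_2^*}^q\,\Id\mu_{g_2}\longrightarrow 0\quad\text{as }n\to\infty,
\]
after which Prop.~\ref{equivalence} yields the $q$-parabolicity of $(M,g_2)$.

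The core of the argument is therefore a pointwise comparison of the two integrands, which I would split into a volume factor and a cometric factor. Working in local coordinates and writing $G_1,G_2$ for the Gram matrices of $g_1,g_2$ and $A$ for the matrix of the endomorphism, the defining relation $g_1(AV_1,V_2)=g_2(V_1,V_2)$ reads $G_2=G_1A$ (equivalently $A^tG_1$, by $g_1$-self-adjointness of $A$). Taking determinants gives $\det(G_2)=\det(G_1)\det(A)$, i.e.
\[
\Id\mu_{g_2}=\det(A)^{\frac12}\,\Id\mu_{g_1}.
\]
For the cometrics, inverting $G_2=G_1A$ produces $G_2^{-1}=A^{-1}G_1^{-1}$, which translates into the identity
\[
g_2^*(\omega,\eta)=g_1^*\big((A^{-1})^t\omega,\eta\big)\qquad\text{for all }\omega,\eta\in\IT^*_xM,
\]
where $(A^{-1})^t\in\mathrm{End}(\IT^*M)$ is exactly the operator of the statement. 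Because $A$ is $g_1$-self-adjoint and positive, $(A^{-1})^t$ is $g_1^*$-self-adjoint and positive, so that
\[
|\omega|_{g_2^*}^2=g_1^*\big((A^{-1})^t\omega,\omega\big)\le|(A^{-1})^t|_{g_1^*}\,|\omega|_{g_1^*}^2 .
\]

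Combining the two facts at $\omega=\Id\phi_n$ and raising to the power $q/2$ yields the pointwise estimate
\[
|\Id\phi_n|_{g_2^*}^q\,\det(A)^{\frac12}\le\Big(\det(A)^{\frac12}\,|(A^{-1})^t|_{g_1^*}^{\frac q2}\Big)\,|\Id\phi_n|_{g_1^*}^q .
\]
Integrating this against $\Id\mu_{g_1}$ and bounding the bracketed factor by some constant $C<\infty$ coming from the $\IL^\infty$-hypothesis, I get
\[
\int_M|\Id\phi_n|_{g_2^*}^q\,\Id\mu_{g_2}\le C\int_M|\Id\phi_n|_{g_1^*}^q\,\Id\mu_{g_1}\longrightarrow 0,
\]
which is the required convergence. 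I do not expect a genuine obstacle here: the proof is essentially a bookkeeping exercise in linear algebra. The one point deserving care is to pin down the index and adjoint conventions so that the factor emerging from the change of measure and of cometric is precisely $\det(A)^{\frac12}|(A^{-1})^t|_{g_1^*}^{\frac q2}$ and not some other combination of $A$; the $\IL^\infty$-assumption is tailored exactly to make this product the relevant quantity.
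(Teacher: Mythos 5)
Your proposal is correct and follows essentially the same route as the paper: both take the cut-off sequence witnessing the $q$-parabolicity of $g_1$, rewrite the $g_2$-energy via the identities $\Id\mu_{g_2}=\det(A)^{\frac{1}{2}}\Id\mu_{g_1}$ and $|\omega|^2_{g_2^*}=g_1^*\big((A^{-1})^t\omega,\omega\big)$, and bound the resulting integrand by $\det(A)^{\frac{1}{2}}|(A^{-1})^t|^{\frac{q}{2}}_{g_1^*}\,|\Id\phi_n|^q_{g_1^*}$ using the $\IL^\infty$-hypothesis. The only (immaterial) difference is that the paper works with Lipschitz cut-offs rather than the $\mathsf{C}^1_{\c}$ sequence from Prop.~\ref{equivalence}.
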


\begin{proof} Let $\{\psi_n\}\subset \mathsf{Lip}_{\c}(M)$ be a sequence of functions that makes $g_1$ $q$-parabolic. In particular 
$$
\lim_{n\rightarrow \infty}\int |\Id \psi_n|^{q}_{g_1^*}\Id\mu_{g_1}=0.
$$ 
Then
\begin{align*}
 &\int |\Id \psi_n|^{q}_{g_2^*}\Id\mu_{g_2}=\int g_1^*\Big((A^{-1})^t\Id \psi_n, \Id \psi_n\Big)^{\f{q}{2}} \det(A)^{\frac{1}{2}}\Id\mu_{g_1}\leq  \int |\Id \psi_n|^{q}_{g_1^*}\big|(A^{-1})^t\big|^{\f{q}{2}}_{g^*_1}\det(A)^{\frac{1}{2}}\Id\mu_{g_1} ,
\end{align*}
which, by assumption, goes to zero. We can thus conclude that $(M,g_2)$ is $q$-parabolic as well. 
\end{proof}

We immediately get the following corollary:

\begin{Corollary}
\label{Confdom}
 Let $M$ be a smooth manifold. Assume that $g_1$ is a  smooth Riemannian metric on $M$ such that $(M,g_1)$ is $q$-parabolic for some $q\in [1,\infty)$. Let $g_2$ be another smooth Riemannian metric on $M$ such that one of the two conditions below is fulfilled:
\begin{enumerate}
\item[(i)] $g_1$ and $g_2$ are quasi-isometric
\item[(ii)] $\dim(M)\geq q$ and $g_2=f^2g_1$ where $f:M\rightarrow \mathbb{R}$ is a smooth function which satisfies $0<f^2\leq c$ for some constant $c>0$ 
\end{enumerate}
Then $(M,g_2)$ is $q$-parabolic.
\end{Corollary}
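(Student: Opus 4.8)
The plan is to obtain both conclusions as immediate consequences of Proposition~\ref{Stab}; all that is required is to check, in each of the two cases, that the endomorphism $A$ defined there satisfies the boundedness condition
\[
\det(A)^{\frac{1}{2}}\cdot\big|(A^{-1})^{t}\big|^{\f{q}{2}}_{g^{*}_{1}}\in\IL^{\infty}(M).
\]
Since $A$ is $g_{1}$-symmetric and strictly positive, I would first diagonalize it pointwise: at each $x\in M$ choose a $g_{1}$-orthonormal basis of $\IT_{x}M$ consisting of eigenvectors of $A$, with eigenvalues $\lambda_{1}(x),\dots,\lambda_{n}(x)>0$, where $n=\dim(M)$. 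Then $\det(A)(x)=\prod_{i}\lambda_{i}(x)$, and a short dual-basis computation shows that $(A^{-1})^{t}$ acts on the $g^{*}_{1}$-orthonormal dual basis with eigenvalues $\lambda_{i}(x)^{-1}$, so that $\big|(A^{-1})^{t}\big|_{g^{*}_{1}}(x)=(\min_{i}\lambda_{i}(x))^{-1}$. This reduces the whole hypothesis to a uniform control of the eigenvalues of $A$.

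In case (i) the quasi-isometry $c^{-1}g_{1}\leq g_{2}\leq c\,g_{1}$ translates, through the defining relation $g_{1}(AV_{1},V_{2})=g_{2}(V_{1},V_{2})$, into $c^{-1}\leq\lambda_{i}(x)\leq c$ for all $i$ and $x$. Hence $\det(A)^{\frac{1}{2}}\leq c^{n/2}$ and $\big|(A^{-1})^{t}\big|^{\f{q}{2}}_{g^{*}_{1}}\leq c^{q/2}$ pointwise, so their product is bounded by $c^{(n+q)/2}$ uniformly in $x$. In case (ii) the relation $g_{2}=f^{2}g_{1}$ forces $A=f^{2}\,\mathrm{Id}_{\IT M}$, i.e. every eigenvalue equals $f^{2}$; then $\det(A)^{\frac{1}{2}}=f^{n}$ and $\big|(A^{-1})^{t}\big|^{\f{q}{2}}_{g^{*}_{1}}=f^{-q}$, so the relevant function is simply $f^{\,n-q}$. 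Since $0<f^{2}\leq c$ bounds $f$ from above and the exponent satisfies $n-q\geq 0$ by the hypothesis $\dim(M)\geq q$, one gets $f^{\,n-q}\leq c^{(n-q)/2}$. In both cases the product lies in $\IL^{\infty}(M)$, and Proposition~\ref{Stab} then yields the $q$-parabolicity of $(M,g_{2})$.

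The only mildly delicate point I expect is the linear-algebra identification $\big|(A^{-1})^{t}\big|_{g^{*}_{1}}=(\min_{i}\lambda_{i})^{-1}$, which rests on the simultaneous diagonalization of $g_{1}$ and $g_{2}$ and on checking that $(A^{-1})^{t}$ is genuinely $g^{*}_{1}$-symmetric, so that its operator norm equals its largest eigenvalue; once this is set up, each case is a one-line bounded-factor estimate. I would also emphasize that the dimension hypothesis in (ii) is precisely what keeps the exponent $n-q$ nonnegative: were $\dim(M)<q$, the factor $f^{\,n-q}$ would blow up wherever $f\to 0$ and the argument would break down, which explains why that assumption is imposed only in the conformal case.
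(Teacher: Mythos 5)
Your proposal is correct and follows exactly the route the paper intends: the paper states Corollary \ref{Confdom} as an immediate consequence of Proposition \ref{Stab} without written details, and your pointwise diagonalization of $A$, the identification $|(A^{-1})^{t}|_{g_1^*}=(\min_i\lambda_i)^{-1}$, and the resulting bounds $c^{(n+q)/2}$ in case (i) and $c^{(n-q)/2}$ in case (ii) are precisely the verification being left to the reader. Your remark on why $\dim(M)\geq q$ is needed in the conformal case is also accurate.
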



The situation is pretty different in the case of almost complex manifolds and $q=2$. In this context, as we will see in the next result, the first condition of the Cor. \ref{Confdom} can be largely relaxed.

\begin{Proposition}
\label{almostcomplex}
Let $(M,J)$ be a smooth almost complex manifold of dimension $2m$. Let $h$ be a smooth Riemannian metric on $M$ compatible with $J$, that is $h(JU_1,JU_2)=h(U_1,U_2)$ for every vector fields $U_1,U_2$ on $M$. Assume that $(M,h)$ is $2$-parabolic. Let $\rho$ be another smooth Riemannian  metric on $M$, compatible with $J$,  such that $\rho\leq ch$ for some $c>0$. Then $(M,\rho)$ is $2$-parabolic. 
\end{Proposition}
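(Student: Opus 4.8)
The plan is to verify the hypothesis of Proposition \ref{Stab} with $q=2$, $g_1=h$ and $g_2=\rho$, so that $2$-parabolicity of $(M,\rho)$ follows automatically from that of $(M,h)$. Following the notation there, let $A\colon \IT M\to \IT M$ be the strictly positive smooth bundle endomorphism determined by $h(AV_1,V_2)=\rho(V_1,V_2)$. Since both $h$ and $\rho$ are symmetric, $A$ is self-adjoint with respect to $h$, hence at each point it is diagonalizable with positive eigenvalues $\lambda_1(x)\leq\cdots\leq\lambda_{2m}(x)$. The assumption $\rho\leq ch$ translates into $A\leq c\cdot\mathrm{Id}$, i.e. $\lambda_{2m}(x)\leq c$ for all $x$. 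The remaining task is to bound $\det(A)^{1/2}\cdot|(A^{-1})^t|_{h^*}$ uniformly on $M$.

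The key observation, and the only place where $J$-compatibility enters, is that $A$ commutes with $J$. First I would note that $J$ is $h$-skew-adjoint: from $h(JU_1,JU_2)=h(U_1,U_2)$ one gets $h(JU_1,U_2)=-h(U_1,JU_2)$. Then, using $\rho(JU_1,JU_2)=\rho(U_1,U_2)$, one computes for all $V_1,V_2$
\[
h(AJV_1,V_2)=\rho(JV_1,V_2)=-\rho(V_1,JV_2)=-h(AV_1,JV_2)=h(JAV_1,V_2),
\]
whence $AJ=JA$. Consequently each eigenspace of $A$ is $J$-invariant; since $J^2=-\mathrm{Id}$ forces every $J$-invariant subspace to be even-dimensional, every eigenvalue of $A$ occurs with even multiplicity. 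In particular the smallest eigenvalue $\lambda_1$ has multiplicity at least two, so $\lambda_1=\lambda_2$.

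Next I would compute the two factors. Because $A$ is $h$-self-adjoint, the transpose $(A^{-1})^t$ acting on $(\IT^* M,h^*)$ has the same eigenvalues as $A^{-1}$, so its pointwise operator norm is $|(A^{-1})^t|_{h^*}=\lambda_1^{-1}$. Using $\lambda_1=\lambda_2$ together with $\lambda_j\leq c$ for $j\geq 3$ I obtain
\[
\det(A)^{\frac12}\cdot|(A^{-1})^t|_{h^*}=\frac{\big(\textstyle\prod_{j=1}^{2m}\lambda_j\big)^{1/2}}{\lambda_1}=\Big(\prod_{j=3}^{2m}\lambda_j\Big)^{1/2}\leq c^{\,m-1},
\]
since $(\prod_{j=1}^{2m}\lambda_j)^{1/2}=\lambda_1\,(\prod_{j=3}^{2m}\lambda_j)^{1/2}$. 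Thus $\det(A)^{1/2}\,|(A^{-1})^t|_{h^*}\leq c^{\,m-1}\in\IL^\infty(M)$, and Proposition \ref{Stab} with $q=2$ yields that $(M,\rho)$ is $2$-parabolic.

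The substantive point — and the reason the statement is phrased for the critical exponent $q=2$ — is the eigenvalue pairing: it is exactly the double occurrence of $\lambda_1$ in $\det(A)^{1/2}$ that cancels the $\lambda_1^{-1}$ coming from $|(A^{-1})^t|_{h^*}$. Without $J$-invariance $\lambda_1$ could appear with odd multiplicity, leaving a factor $\lambda_1^{-1/2}$ that is unbounded as $\rho$ degenerates relative to $h$; and for $q>2$ even the pairing no longer suffices, since one is left with a negative power of $\lambda_1$. I therefore expect the main (though short) obstacle to be establishing $AJ=JA$ cleanly and extracting the even-multiplicity consequence, the remaining estimate being the elementary bookkeeping above.
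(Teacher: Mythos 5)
Your proof is correct and follows essentially the same route as the paper: reduce to Proposition \ref{Stab} with $q=2$, show $AJ=JA$ so that eigenspaces of $A$ are $J$-invariant and hence even-dimensional, and use the resulting pairing of eigenvalues to bound $\det(A)^{1/2}\cdot|(A^{-1})^t|_{h^*}$ by $c^{m-1}$. The only difference is cosmetic — you spell out the verification of $AJ=JA$ and the cancellation of $\lambda_1$, which the paper leaves as immediate.
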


\begin{proof}
Let $A$ be as in Prop. \ref{Stab} such that $\rho(V_1,V_2)=h(AV_1,V_2)$. Then it is immediate to check that  $JA=AJ$. Let $p\in M$ be any point in $M$. Let $\lambda$ be an eigenvalue of $A_p:T_pM\rightarrow T_pM$, let $E_p(\lambda)$ be the corresponding eigenspace and let $J_p:T_pM\rightarrow T_pM$ be the action of $J$ on $T_pM$.   We know that  $J_p$ preserves $E_p(\lambda)$ because $J_pA_p=A_pJ_p$. Hence we can conclude that the dimension of $E_p(\lambda)$ is even. This in turn tells us that there exist at most $m$ distinct, positive real numbers $0<\lambda_1\leq...\leq\lambda_m$ such that the eigenvalues of $A_p$ are $\{\lambda_1,\lambda_1,...,\lambda_m,\lambda_m\}$. In particular we get that $$\det(A_p)=\prod_{n=1}^m \lambda_n^2.$$ By the fact that $\rho\leq ch$ we easily get that $0<\lambda_n\leq c$ and this immediately yields the following bound:
\begin{equation}
\label{upperbound}
\det(A_p)^{\frac{1}{2}}\cdot|(A_p^{-1})^t|_{h^*}\leq c^{m-1}.
\end{equation} 
Since the right hand side of \eqref{upperbound} does not depend on $p$ we proved that $\det(A)^{\frac{1}{2}}\cdot|(A^{-1})^t|_{h^*} \in\IL^{\infty}(M)$ and thus in virtue of Prop. \ref{Stab} the proof is completed.
\end{proof}

As an immediate consequence of the previous proposition we get the following:
\begin{Corollary}
Let $(M,h)$ be a smooth complex Hermitian manifold. Let $\rho$ be another smooth Hermitian metric on $M$ such that $\rho\leq ch$ for some $c>0$. If $(M,h)$ is $2$-parabolic then $(M,\rho)$ is $2$-parabolic as well.
\end{Corollary}


Finally we discus an issue which arises naturally by the previous propositions. Let $(M,g)$ be a smooth Riemannian manifold which is $2$-parabolic. Let $h$ be another smooth Riemannian metric on $M$ such that $h\leq cg$ for some constant $c>0$. The question that arises now is:
\begin{center} 
  Is then $h$ $2$-parabolic as well? 
\end{center}

In case $M$ is complex and the metrics are Hermitian, we have seen that the answer is yes. In general, clearly we can always find a positive function $f:M\rightarrow \mathbb{R}$ such that $f^2g\leq h\leq cg$. By Corollary \ref{Confdom} we know that $f^2g$ is still $2$-parabolic, at least if $\dim(M)\geq 2$. Thus the Riemannian metric $h$ is bounded above and below by two $2$-parabolic Riemannian metrics. Nevertheless, and somewhat surprisingly, it turns out that in general, the answer to the above question is NO. We give a counterexample on a surface: \vspace{2mm}

Let $\overline{M}$ be a smooth compact surface with boundary. Let $Z$ be the boundary and let $M$ be the interior. Let $\phi:U\rightarrow Z\times[0,1)$ be a collar neighborhood of $Z$. Let $g$ be a smooth Riemannian metric on $M$ such that $(\phi^{-1})^*(g|_U)=\Id x^2+x^2g'$ where $g'$ is a smooth Riemannian metric on $Z$. Let $h$ be another smooth Riemannian metric on $M$ such that $(\phi^{-1})^*(g|_U)=x^2(\Id x^2+g')$. Clearly, for some constant $c>0$, we have $h\leq cg$. Moreover, as we have seen in the previous section, $(M,g)$ is $2$-parabolic.  We want to show now that $(M,h)$ is not $2$-parabolic. The proof is carried out by contradiction. Assume that $(M,h)$ is $2$-parabolic and let $\{\psi_n\}_{n\in \mathbb{N}}\subset \mathsf{Lip}_{\c}(M)$ be a sequence which makes $(M,h)$ $2$-parabolic. Consider a smooth Riemannian metric $h'$ on $M$ such that  $(\phi^{-1})^*(g|_U)=\Id x^2+g'$. A straightforward calculation shows that the same sequence $\{\psi_n\}$ satisfies Prop. \ref{improved} with respect to $(M,h')$. This in turn implies immediately that on $(M,h')$ the Sobolev spaces $\mathsf{W}^{1,2}_0(M,h')$ and $\mathsf{W}^{1,2}(M,h')$ coincide, but this is well-known to be false, see for instance $M= B(0,1) $ where $B(0,1)$ is the Euclidean ball centered in $0$ and of radius $1$.\\We point out moreover that the conclusion that $(M,h')$ is not $2$-parabolic follows also using the criterion provided by Cor. 5.2 in \cite{Troya}. Indeed let $N$ be the open surfaces obtained by gluing $[0,\infty)\times Z$ to the boundary of $\overline{M}$ and let $\rho$ be a Riemannian metric on $N$ that over $[0,\infty)\times N$ is given by $\mathrm{e}^{-2r}\Id r^2+g'$. Since $(U,h'|_U)$ and $([0,\infty)\times Z,\rho|_{[0,\infty)\times Z})$ are isometric it is clear that  $(M,h')$ is $2$-parabolic  if and only if $(N,\rho)$ is $2$-parabolic. Moreover, as $2$-parabolicity on surfaces is stable under a conformal change, $(N,\rho)$ is $2$-parabolic if and only if $(N,\rho')$ is $2$-parabolic where $\rho'$ is given by $\beta\rho$ and $\beta:N\rightarrow \mathbb{R}$ is a positive function that over $[0,\infty)\times Z$ coincides with $\mathrm{e}^{2r}$. This means that over $[0,\infty)\times Z$ $\rho'$ takes the form $\Id r^2+\mathrm{e}^{2r}g'$. Now, as we have $\int_{0}^{\infty}\mathrm{e}^{-r}\Id r<\infty$, we can conclude by Cor. 5.2 in \cite{Troya} that $(N,\rho')$ is not $2$-parabolic.

\vspace{4mm}


\subsection{Open subsets of closed  manifolds}

Consider a smooth compact Riemannian manifold $(M,g)$ of dimension $m$. Let $\Sigma\subset M$ be a subset made of a finite union of closed smooth submanifolds, $\Sigma=\cup_{i=1}^n S_i$ such that, for some $z\geq 2$, each submanifold $S_i$ has codimension greater or equal than $z$, that is $\mathrm{cod}(S_i)\geq z\geq 2$. Let $A$ be defined as $M\setminus  \Sigma$ and consider the restriction of $g$ over $A$, $g|_{A}$.

\begin{Proposition}
\label{Open}
In the above situation, $(A,g|_{A})$ is $q$-parabolic for any $q\in[1,z]$. 
\end{Proposition}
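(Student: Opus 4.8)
The plan is to prove that $(A, g|_A)$ is $q$-parabolic by constructing an explicit sequence of cut-off functions that vanish near the singular set $\Sigma$ and whose differentials have $\mathsf{L}^q$-norm tending to zero. Since $\Sigma = \cup_{i=1}^n S_i$ is a finite union of closed submanifolds each of codimension $\geq z \geq 2$, and $M$ is compact, it suffices by a partition-of-unity argument to work near a single stratum $S_i$ and control the behaviour in a tubular neighborhood. The key geometric input is that in a tubular neighborhood of a closed submanifold $S$ of codimension $k = \mathrm{cod}(S_i) \geq z$, one may use the normal distance function $r$ to $S$, and the volume measure behaves like $r^{k-1}\,\Id r$ in the radial direction (times the measures on $S$ and on the normal sphere fibres).

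First I would fix $i$ and introduce, in a tubular neighborhood of $S_i$, the radial distance $r$ from $S_i$, so that $|\Id r|_{g^*} \leq C$ is bounded (in fact essentially $1$) and the volume form is comparable to $r^{k-1}\,\Id r\,\Id\mu_{S_i}\,\Id\mu_{\text{fibre}}$ with $k = \mathrm{cod}(S_i)$. Then I would define logarithmic-type cut-offs $\eta_n(r)$ interpolating between $0$ near $r=0$ and $1$ away from $S_i$, of the classical form used to show removability of sets of codimension $\geq q$: for instance $\eta_n = 0$ for $r \leq 1/n^2$, a logarithmic (or power $(\cdot)^{\epsilon}$) bridge on an intermediate annulus, and $\eta_n = 1$ for $r \geq 1/n$. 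The heart of the estimate is then the integral $\int r^{|\eta_n'|^q}\, r^{k-1}\,\Id r$, which converges to $0$ precisely because $k \geq z \geq q$ makes the weight $r^{k-1-q}$ integrable near $0$ (for $q < k$) or handles the borderline $q = k$ via the logarithmic profile. This is entirely analogous to the radial computation carried out in the proof of Theorem \ref{straticom}, equations \eqref{newver}--\eqref{secvolest}, specialized to a smooth submanifold where the ``link'' is a round sphere and the exponent $c_{m-i}$ equals $1$.

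After constructing the local cut-off $\eta_{i,n}$ near each $S_i$, I would form $\psi_n := \prod_{i=1}^n \eta_{i,n}$ (or a minimum), which vanishes near all of $\Sigma$, equals $1$ outside the tubular neighborhoods, satisfies $0 \leq \psi_n \leq 1$, and converges to $1$ pointwise on $A$. Using the product rule and the uniform bounds on the cut-offs together with the finiteness of $\mu_g(M) < \infty$ (as $M$ is compact), I would estimate $\|\Id\psi_n\|_{\mathsf{L}^q\Omega^1(A, g|_A)}$ by a finite sum of the local radial integrals, each of which tends to $0$. This yields a sequence satisfying the hypotheses of Prop. \ref{improved}, hence $(A, g|_A)$ is $q$-parabolic. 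I expect the main obstacle to be the borderline case $q = z = \mathrm{cod}(S_i)$, where the naive power cut-off fails to give a vanishing integral and one must use the logarithmic profile (or the $r^{\epsilon_n}$ trick with $\epsilon_n \to 0$ already employed in \eqref{cutstratif}) to force convergence; the generic case $q < z$ is routine once the tubular-neighborhood volume comparison is in place.
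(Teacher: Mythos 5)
Your proposal is correct and is essentially the paper's argument: the paper likewise treats each stratum $S_i$ separately and then multiplies the per-stratum cut-off sequences, bounding $|\Id\psi_j|$ by the sum of the individual contributions and finishing the range $q\in[1,z)$ via $\mu_g(A)<\infty$. The only difference is one of packaging: where you carry out the tubular-neighborhood radial estimate by hand (with the logarithmic or $r^{\epsilon_n}$ profile for the borderline case $q=\cod(S_i)$), the paper obtains the same per-stratum cut-offs by viewing $M\supset S_i$ as a depth-one Thom--Mather stratified pseudomanifold whose induced metric is an edge metric with $\hat c=1$ and invoking Theorem \ref{straticom}, whose proof contains exactly the computation you describe.
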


\begin{proof}
We start the proof by showing  that $(A,g|_{A})$ is $z$-parabolic. Define $A_i:= M\setminus  S_i$ and let $s_i$ be the dimension of $S_i$. As a first step we want to prove that $(A_i,g|_{A_i})$ is $z$-parabolic. This follows by applying Th. \ref{straticom} and in particular \eqref{condition1}. Indeed let $p\in S_i$ an arbitrary point. Then we can find an open neighborhood $U$ of $p$ and a diffeomorphism $\Phi:U\rightarrow (0,1)^{m}$ such that $\Phi(U\cap S_i)=\{0\}\times \mathbb{R}^{s_i}$ and  such that $(\Phi^{-1})^*(g|_{U})\sim g_e|_{(0,1)^m}$, that is $(\Phi^{-1})^*(g|_{U})$ is quasi-isometric to the restriction on $(0,1)^m$ of the standard euclidean metric $g_e:=\Id x_{1}^2+...+\Id x_m^2$ of $\mathbb{R}^m$. Now, writing $(0,1)^m$ as $(0,1)^{m-s_i}\times (0,1)^{s_i}$ and using cylindrical coordinates, we can write the metric $g_e$ restricted on $(0,1)^m\setminus \Phi(U\cap S_i)$  as $$\sum_{i=1}^{s_i}\Id x_i^2+\Id r^2+r^2h$$ where $r$ is the usual distance function on $\mathbb{R}^{m-s_i}$ and $h$ is a Riemannian metric on $\mathbb{S}^{m-s_i-1}$. Therefore we can consider the smooth incomplete Riemannian manifold $(A_i,g|_{A_i})$ as the regular part of a compact smoothly Thom-Mather stratified pseudomanifold of depth one given by $M\supset S_i$ endowed with an edge metric. Since in this case we have $\hat{c}=1$ and $\mathrm{cod}(S_i)\geq z$, by applying \eqref{condition1}, we get  $(m-s_i-1)\geq z-1$ and thus we can conclude that $(A_i,g|_{A_i})$ is $z$-parabolic. \\
Now, for each $i=1,...,n$, let   $(\psi_{j,A_{i}})_{j\in\IN}\subset \mathsf{Lip}_{\c}(A_i)$ be a sequence which satisfies the assumptions of Prop. \ref{improved}.  
We define $$0\leq \psi_j:=\prod_{i=1}^n \psi_{j,A_{i}}\leq 1$$ and claim that this sequence makes $(A,g|_{A})$ $z$-parabolic. To see this, note first that for each $j\in \mathbb{N}$,  $\psi_j$ is defined as a product of a finite number of compactly supported Lipschitz functions and therefore is in turn a compactly supported Lipschitz function, and thus $\Id \psi_j$ is well-defined. Clearly the support of $\psi_j$ is contained in $A$ and  $\psi_j\to1$ pointwise. In order to complete the proof we have to show that 
\begin{equation}
\label{limit}
\lim_{j\rightarrow \infty}\int_A  |\Id \psi_j|^z_{g^*|_{A}}\Id\mu_{g|_{A}}=0.
\end{equation}
To this end, note that $\Id \psi_j=\sum_{i=1}^n\phi_i\Id\psi_{j,A_i}$ where $\phi_i$ is given by the product 
$$
\phi_i=\psi_{j,A_1}...\psi_{j,A_{i-1}}\psi_{j,A_{i+1}}...\psi_{j,A_n}.
$$
By the fact that $0\leq \phi_i\leq 1$, in order  to establish \eqref{limit}, we have the following estimate for some $C>0$,
$$
\int_A  |\Id \psi_j|^z_{g^*|_{A}}\Id\mu_{g|_{A}}\leq C\sum^n_{i=1}\int_{A_i}  |\Id \psi_{j,A_i}|^z_{g^*|_{A_i}}\Id\mu_{g|_{A_i}},
$$
which tends to zero as $j\to\infty$ by what we have said above. Hence we can conclude that $(A,g|_{A})$ is $z$-parabolic. Finally, by the fact that $\mu_g(A)<\infty$, we have a continuous inclusion 
$$
\IL^{q_2}\Omega^1(A,g|_{A})\longhookrightarrow \IL^{q_1}\Omega^1(A,g|_{A})\>\text{ for each $1\leq q_1 \leq q _2\leq \infty$},
$$
which implies the desired $q$-parabolicity for $q\in [1,z]$.
\end{proof}

\begin{Remark}
In the previous proposition the case $1<q<z$ is a particular case of \cite{Troya} Cor. 4.1. Moreover a different proof that $M\setminus S_i$ is $2$-parabolic can be found in \cite{ChFe}. As we will see in the next results, the case $\mathrm{cod}(S_i)=2$ provides important applications to the $2$-parabolicity in the setting of complex geometry.
\end{Remark}

\begin{Proposition}
\label{almostcomplex2}
Let  $(M,J)$ be a compact   almost complex manifold. Let $\Sigma\subset M$ be a closed subset such that $\Sigma=\cup_{i=1}^nS_i$ where each $S_i$ is a closed submanifold of $M$ satisfying  $\mathrm{cod}(S_i)\geq 2$. Let $A:=M\setminus \Sigma$ and let  $g$ be a smooth symmetric non-negative section of $T^*M\otimes T^*M\to M$ such that $g$ is compatible with $J$ and $g|_{A}$ is strictly positive (in other words, $g|_{A}$ is a Riemannian metric). Then $(A,g|_{A})$ is $q$-parabolic for $q\in [1,2]$.
\end{Proposition}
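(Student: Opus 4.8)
The plan is to reduce Proposition \ref{almostcomplex2} to Proposition \ref{Open} via the stability result for almost complex manifolds, Proposition \ref{almostcomplex}. The key observation is that the degenerate section $g$ is only assumed non-negative on all of $M$ and strictly positive on $A$, so I cannot directly compare it with a genuine Riemannian metric on $A$ unless I first produce an auxiliary honest metric that dominates $g|_A$. First I would fix, using compactness of $M$, a smooth $J$-compatible \emph{Riemannian} metric $h$ on all of $M$ (for instance, start from any smooth metric and average over $J$ by setting $h(U_1,U_2) := \tfrac{1}{2}(g_0(U_1,U_2)+g_0(JU_1,JU_2))$ for a smooth metric $g_0$); by compactness there is a constant $c>0$ with $g|_A \leq c\,h|_A$ as symmetric forms on $A$, since $g$ is a globally defined continuous section and $h$ is a genuine metric.

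Next I would invoke Proposition \ref{Open} to conclude that $(A, h|_A)$ is $2$-parabolic: indeed each $S_i$ is a closed submanifold of codimension $\geq 2$, so with $z=2$ the hypotheses of Proposition \ref{Open} are exactly met, giving $q$-parabolicity of $(A,h|_A)$ for all $q\in[1,2]$, in particular $2$-parabolicity. The restricted metric $h|_A$ is smooth and $J$-compatible on the smooth manifold $A$. Now the bound $g|_A \leq c\,h|_A$ together with the fact that $g|_A$ is a genuine smooth $J$-compatible Riemannian metric on $A$ places us precisely in the situation of Proposition \ref{almostcomplex}, applied on the manifold $A$ with the $2$-parabolic background metric $h|_A$ and the dominated compatible metric $g|_A$. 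Applying that proposition yields that $(A,g|_A)$ is $2$-parabolic.

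Finally, to obtain $q$-parabolicity for all $q\in[1,2]$ rather than just $q=2$, I would use that $\mu_{g|_A}(A)<\infty$. This finiteness holds because $g|_A \leq c\,h|_A$ forces $\mu_{g|_A}(A)\leq c^{m'}\mu_{h|_A}(A)$ (where $m'=\dim M$) and $\mu_{h|_A}(A)\leq \mu_h(M)<\infty$ by compactness. As in the closing argument of Proposition \ref{Open}, finite volume gives continuous inclusions $\IL^{q_2}\Omega^1(A,g|_A)\hookrightarrow \IL^{q_1}\Omega^1(A,g|_A)$ for $1\leq q_1\leq q_2$, so a sequence of cut-off functions witnessing $2$-parabolicity automatically witnesses $q$-parabolicity for every $q\in[1,2]$.

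The main obstacle I anticipate is the first step: verifying cleanly that the \emph{globally degenerate} section $g$ satisfies a genuine upper bound $g|_A\leq c\,h|_A$ by an honest metric $h$. Because $g$ degenerates along $\Sigma$, one must be careful that the comparison constant $c$ is uniform across all of $A$; this is where compactness of $M$ and the fact that $g$ extends continuously (indeed smoothly) as a non-negative section over the whole of $M$ are essential—the supremum of the "generalized eigenvalues" of $g$ relative to $h$ is attained on the compact space $M$ and is finite precisely because $g$ is a bounded continuous section. Once this uniform domination is secured, the remainder is a direct concatenation of the already-established Propositions \ref{Open} and \ref{almostcomplex} plus the finite-volume interpolation.
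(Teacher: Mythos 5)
Your proposal is correct and follows essentially the same route as the paper, whose proof is simply the one-line observation that the statement follows from Propositions \ref{Open} and \ref{almostcomplex}; you have merely made explicit the auxiliary $J$-compatible background metric $h$ on $M$, the uniform bound $g|_{A}\leq c\,h|_{A}$ from compactness, and the finite-volume interpolation down to $q\in[1,2]$. All of these steps are exactly the intended ones and are carried out correctly.
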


\begin{proof}
This follows from Prop. \ref{almostcomplex} and Prop. \ref{Open}.
\end{proof}

\subsection{Hermitian complex spaces}

This section contains applications of Prop. \ref{almostcomplex} and Prop. \ref{Open} to the $q$-parabolicity of complex Hermitian spaces. Complex spaces are a classical topic of complex geometry and we refer to \cite{FiG} and to \cite{GrRe} for a deep development of this subject. Here we recall only what is strictly necessary for our aims. A reduced and paracompact complex space $X$ is said  \emph{Hermitian} if the regular part  $\reg(X):=X\setminus \sing(X)$ carries a Hermitian metric $h$ such that for every point $x\in X$ there exists an open neighborhood $U\ni p$ in $X$, a proper holomorphic embedding of $U$ into a polydisc $\phi: U \rightarrow \mathbb{D}^N\subset \mathbb{C}^N$ and a Hermitian metric $\beta$ on $\mathbb{D}^N$ such that $(\phi|_{\reg(U)})^*\beta=h$. A natural example is provided by any subvariety $V$ of a complex Hermitian manifold $(M,g)$ with the metric given by the restriction of $g$ on $\reg(V)$. 
According to the celebrated work of Hironaka the singularities of $X$ can be resolved. More precisely there exists a compact complex manifold $M$, a divisor $E$ with only normal crossings and a surjective holomorphic map $\pi:M\to X$ such that $\pi^{-1}(\reg(X))=M\setminus D$
$$
\pi|_{M\setminus E}:M\setminus E\longrightarrow \reg(X)
$$
is a biholomorphism. We invite the interested reader to consult \cite{Hiro} and \cite{BiMi}. Here we simply recall that a divisor with only normal crossings is a divisor of the form $D=\sum_iV_i$ where $V_i$ are distinct irreducible smooth hypersurfaces and $D$ is defined in a neighborhood of any point by an equation in local analytic coordinates of the type $z_1\cdot...\cdot z_k=0$.

We are finally in the position to state the next result.

\begin{Theorem}
\label{paraHermitian}
Let $(X,h)$ be an irreducible, compact Hermitian complex space. Let $g$ be a smooth Hermitian metric on $\reg(X)$ such that $g\leq ch$ for some $c>0$. Then $(\reg(X),g)$ is $q$-parabolic for each $q\in [1,2]$.
\end{Theorem}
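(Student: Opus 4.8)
The plan is to resolve the singularities of $X$ and to transport everything to the complement of a normal crossing divisor inside a smooth compact complex manifold, where Proposition \ref{almostcomplex2} and Proposition \ref{almostcomplex} apply directly. By Hironaka's theorem recalled above there are a compact complex manifold $M$ with complex structure $J$, a divisor with only normal crossings $E=\sum_i V_i$ (each $V_i$ a smooth irreducible hypersurface, hence a closed submanifold of real codimension $2$), and a surjective holomorphic map $\pi:M\to X$ with $\pi^{-1}(\reg(X))=M\setminus E$ such that $\pi|_{M\setminus E}:M\setminus E\to\reg(X)$ is a biholomorphism. Put $A:=M\setminus E$. Since $\pi|_A$ is in particular a diffeomorphism, it identifies $(\reg(X),h)$ isometrically with $(A,\pi^*h)$ and $(\reg(X),g)$ isometrically with $(A,\pi^*g)$, so it suffices to establish the parabolicity statements on $A$.

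The key step is to show that $\pi^*h$ is the restriction to $A$ of a globally defined, smooth, non-negative symmetric section $\tilde h$ of $T^*M\otimes T^*M$ which is compatible with $J$ and strictly positive on $A$. This is exactly where the Hermitian structure of $X$ enters. By definition every point of $X$ has a neighbourhood $U$ carrying a proper holomorphic embedding $\phi:U\to\mathbb{D}^N\subset\IC^N$ and a smooth Hermitian metric $\beta$ on $\mathbb{D}^N$ with $h|_{\reg(U)}=(\phi|_{\reg(U)})^*\beta$. As $\pi$ is holomorphic, the composition $\phi\circ\pi:\pi^{-1}(U)\to\IC^N$ is a holomorphic map defined on all of $\pi^{-1}(U)$, including the points over $\sing(X)$; since $\beta$ is smooth, the pullback $(\phi\circ\pi)^*\beta$ is a smooth, non-negative, $J$-compatible symmetric section on $\pi^{-1}(U)$ which coincides with $\pi^*h$ on the dense open set $A\cap\pi^{-1}(U)$ and is strictly positive there (where $\pi$ is a biholomorphism and $\pi^*h$ is a genuine metric). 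Because $E$ is nowhere dense, any two such local pullbacks agree on overlaps --- they agree on a dense set and are continuous --- and hence glue to the desired global section $\tilde h$.

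With $\tilde h$ in hand I would apply Proposition \ref{almostcomplex2} to the compact almost complex manifold $(M,J)$, the closed subset $\Sigma=E=\bigcup_i V_i$ (a finite union of closed submanifolds of codimension $\geq 2$) and the tensor $\tilde h$: this gives that $(A,\tilde h|_A)=(A,\pi^*h)$ is $q$-parabolic for every $q\in[1,2]$, in particular $2$-parabolic, and therefore so is $(\reg(X),h)$ by the isometry above. Now $g$ and $h$ are both Hermitian metrics on the complex manifold $\reg(X)$ with $g\leq ch$, so Proposition \ref{almostcomplex} (with $M=\reg(X)$, $h=h$, $\rho=g$) yields that $(\reg(X),g)$ is $2$-parabolic as well.

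It remains to descend to $q\in[1,2)$ by the finite-volume argument used at the end of the proof of Proposition \ref{Open}. Since $\tilde h$ is smooth on the compact manifold $M$, its associated density is bounded, so $\mu_h(\reg(X))=\int_A\mu_{\pi^*h}\leq\int_M\mu_{\tilde h}<\infty$; the bound $g\leq ch$ then gives $\mu_g\leq c'\,\mu_h$ pointwise for a constant $c'>0$ depending only on $c$ and $\dim X$, and hence $\mu_g(\reg(X))<\infty$. Finite total volume yields the continuous inclusions $\IL^{2}\Omega^1(\reg(X),g)\hookrightarrow\IL^{q}\Omega^1(\reg(X),g)$ for $q\in[1,2]$, so a cut-off sequence witnessing $2$-parabolicity also witnesses $q$-parabolicity for every $q\in[1,2]$. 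The main obstacle is the key step: one must guarantee that the pulled-back metric $\pi^*h$, which degenerates along the exceptional divisor, nevertheless extends to a smooth non-negative tensor on all of $M$; this is precisely what the holomorphicity of the resolution together with the embedded local description of the Hermitian metric $h$ provides, while the hypothesis $g\leq ch$ is what transfers parabolicity from $h$ to $g$ and secures the finiteness of volume.
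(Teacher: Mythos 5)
Your proposal is correct and follows essentially the same route as the paper's proof: resolve singularities via Hironaka, note that the normal crossing divisor $E$ is a finite union of closed codimension-$2$ submanifolds, apply Prop.~\ref{almostcomplex2} to $(M\setminus E,\pi^*h)$, transfer $2$-parabolicity to $(\reg(X),h)$ by the isometry, pass to $g$ via Prop.~\ref{almostcomplex} using $g\leq ch$, and descend to $q\in[1,2)$ by finiteness of volume. Your additional justification that $\pi^*h$ extends to a smooth non-negative $J$-compatible tensor on all of $M$ (via the local embeddings $\phi:U\to\mathbb{D}^N$ and holomorphicity of $\phi\circ\pi$) is a welcome elaboration of a point the paper only asserts.
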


\begin{proof}
We start proving that  $(\reg(X),h)$ is $2$-parabolic. Let $M,E$ and $\pi$ be as described above. We point out that $M\setminus E$ satisfies the assumptions of Prop. \ref{Open} because $E$ is a divisor with only normal crossings and hence is a finite union of nonsingular  compact complex hypersurfaces  of $M$. Thus each component of $E$ has codimension $2$.
Consider now $\pi^*h$. This is a non negative Hermitian product on $M$ such that $\pi^*h>0$ on $M\setminus E$, that is $\pi^*h$ is a Hermitian metric on $M\setminus E$. 
Using Prop. \ref{almostcomplex2} we can thus conclude that $(M\setminus E, \pi^*h)$ is $2$-parabolic. By the fact that
$$
\pi|_{M\setminus E}:(M\setminus E,\pi^*h)\longrightarrow (\reg(X),h)
$$
is an isometry we get  that $(\reg(X),h)$ is $2$-parabolic. Using again Prop. \ref{almostcomplex} we have now that $(\reg(X),g)$ is $2$-parabolic and finally, by the fact that $\mu_g(\reg(X))<\infty$, we can argue as in the proof of Prop. \ref{Open} in order to conclude that $(\reg(X),g)$ is $q$-parabolic for each $q\in [1,2]$.
\end{proof}

\begin{Remark}
In the setting of Theorem \ref{paraHermitian}. A different proof of the fact that $(\reg(X),h)$ is $2$-parabolic has been given in \cite{JR} and follows also by Lemma 1.2 of \cite{NSy}.
\end{Remark}


%
We have the following corollary:

\begin{Corollary}\label{Fubini}
 Let $(M,g)$ be a smooth complex manifold and let $V$ be a compact subvariety of $M$. Let  $g_{V}$ be the Hermitian metric on $\reg(V)$ induced by the restriction of $g$ and let $h$ be any smooth Hermitian metric on $\reg(V)$ such that $h\leq cg_V$ for some $c>0$. Then $(\reg(V),h)$ is $q$-parabolic for any $q\in [1,2]$. 
\end{Corollary}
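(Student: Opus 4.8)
The plan is to deduce Corollary \ref{Fubini} directly from Theorem \ref{paraHermitian}, treating the subvariety $V$ as an irreducible compact Hermitian complex space in its own right. The key observation is that a compact subvariety $V$ of a complex manifold $(M,g)$ is precisely a compact Hermitian complex space in the sense defined above: the ambient metric $g$ restricts to a Hermitian metric $g_V$ on $\reg(V)$, and the required local holomorphic embeddings into polydiscs are supplied for free by the local coordinate charts of $M$ together with the local equations cutting out $V$. Thus the hypotheses of Theorem \ref{paraHermitian} are met with $h$ there playing the role of $g_V$ here.

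First I would address a possible reducibility issue. Theorem \ref{paraHermitian} is stated for \emph{irreducible} Hermitian complex spaces, whereas $V$ is only assumed compact. I would note that a compact subvariety $V$ decomposes into finitely many irreducible components $V=V_1\cup\dots\cup V_k$, and that $\reg(V)$ differs from the disjoint union $\bigsqcup_j \reg(V_j)$ only by the lower-dimensional set where components meet, which has measure zero and does not affect parabolicity. Applying Theorem \ref{paraHermitian} to each irreducible component $(V_j, g_V|_{\reg(V_j)})$ gives $q$-parabolicity of each piece for $q\in[1,2]$, and a finite disjoint union of $q$-parabolic manifolds is again $q$-parabolic (one simply takes the sequences of cut-off functions on each component simultaneously). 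Hence $(\reg(V), g_V)$ is $q$-parabolic.

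Next I would upgrade from $g_V$ to the arbitrary metric $h$ satisfying $h\le c\,g_V$. Having established that $(\reg(V), g_V)$ is $2$-parabolic, the hypothesis $h\le c\,g_V$ is exactly the domination condition required by Proposition \ref{almostcomplex}, applied with the almost complex structure $J$ inherited from $M$ on $\reg(V)$ and with $g_V$ compatible with $J$ (since it is Hermitian). Proposition \ref{almostcomplex} then yields that $(\reg(V), h)$ is $2$-parabolic. Finally, since $\mu_h(\reg(V))<\infty$ because $V$ is compact, the continuous inclusion of $\IL^{q_2}$ into $\IL^{q_1}$ forms for $1\le q_1\le q_2$ on a finite-measure space upgrades $2$-parabolicity to $q$-parabolicity for every $q\in[1,2]$, exactly as in the closing argument of the proof of Proposition \ref{Open}.

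The main obstacle I anticipate is the reducibility bookkeeping in the first step: verifying carefully that restricting to each irreducible component is legitimate and that the intersection locus $\bigcup_{i<j}(V_i\cap V_j)$ is genuinely negligible. One must confirm that $\reg(V)$ and $\bigsqcup_j\reg(V_j)$ agree up to a set of $\mu_{g_V}$-measure zero, which follows because the pairwise intersections are complex-analytic subsets of strictly smaller dimension, and that parabolicity is insensitive to such removals and to finite disjoint unions. Everything else is a direct invocation of results already proved earlier in the paper, so the argument is essentially a matter of correctly identifying $V$ as an instance of the Hermitian complex spaces treated in Theorem \ref{paraHermitian}.
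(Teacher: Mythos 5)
Your proposal matches the paper's own proof, which is literally the single sentence that the corollary is an immediate consequence of Theorem \ref{paraHermitian}: you correctly identify $V$ with its induced metric as a compact Hermitian complex space and apply that theorem with the dominated metric $h$. The one caution is in your reducibility step: a removed closed set having measure zero is not by itself enough to leave parabolicity unaffected (one needs vanishing $q$-capacity, since restricting to an open subset only shrinks the class of test functions), but this does hold here because the pairwise intersections of components are analytic subsets of real codimension at least two, so your argument goes through.
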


\begin{proof}
This follows as an immediate consequence of Theorem \ref{paraHermitian}.
\end{proof}

As a particular case of the previous corollary we have:
\begin{Corollary}
\label{projproj}
Let $V\subset \mathbb{C}\mathbb{P}^n$ be a complex projective variety. Let $g$ be the K\"ahler metric on $\reg(V)$ induced by the Fubini-Study metric of $\mathbb{C}\mathbb{P}^n$ and let $h$ be any smooth Hermitian metric on $\reg(V)$ such that $h\leq cg$ for some $c>0$. Then $(\reg(V),h)$ is $q$-parabolic for $q\in [1,2]$.
\end{Corollary}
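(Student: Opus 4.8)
The plan is to obtain this statement as a direct specialization of Corollary \ref{Fubini}, with no genuinely new work required. To this end I would take the ambient smooth complex manifold in Corollary \ref{Fubini} to be $M := \mathbb{C}\mathbb{P}^n$, which is a compact K\"ahler, in particular Hermitian, manifold, and I would equip it with the Fubini-Study metric $g_{\mathrm{FS}}$. Since $V \subset \mathbb{C}\mathbb{P}^n$ is a complex projective variety, it is in particular a compact analytic subvariety of $M$, and thus $V$ plays exactly the role of the compact subvariety appearing in the hypotheses of Corollary \ref{Fubini}.

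The next step is simply to match the metrics. The metric $g$ in the present statement, namely ``the K\"ahler metric on $\reg(V)$ induced by the Fubini-Study metric'', is by definition the restriction of $g_{\mathrm{FS}}$ to $\reg(V)$, and therefore coincides with the induced metric denoted $g_V$ in Corollary \ref{Fubini}. Consequently the hypothesis $h \leq c g$ of the present statement is precisely the condition $h \leq c\, g_V$ required there, while $h$ is a smooth Hermitian metric on $\reg(V)$ by assumption. With these identifications every hypothesis of Corollary \ref{Fubini} is verified, and applying that corollary yields at once that $(\reg(V), h)$ is $q$-parabolic for every $q \in [1,2]$, which is the desired conclusion.

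I expect there to be essentially no analytic obstacle in this argument: the entire substantive content, namely the resolution of singularities via Hironaka's theorem together with the codimension-$\geq 2$ parabolicity input from Prop. \ref{Open} and the Hermitian stability from Prop. \ref{almostcomplex}, has already been absorbed into Theorem \ref{paraHermitian} and hence into Corollary \ref{Fubini}. The only point genuinely to check is the purely formal matching of hypotheses described above, namely that $(\mathbb{C}\mathbb{P}^n, g_{\mathrm{FS}})$ is a legitimate instance of the compact complex manifold $(M,g)$ of Corollary \ref{Fubini} and that the Fubini-Study-induced metric on $\reg(V)$ is the $g_V$ of that corollary; both are immediate.
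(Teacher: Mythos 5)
Your proposal is correct and coincides with the paper's own treatment: the paper states Corollary \ref{projproj} as an immediate special case of Corollary \ref{Fubini}, obtained exactly as you do by taking $M=\mathbb{C}\mathbb{P}^n$ with the Fubini-Study metric and noting that $V$ is then a compact subvariety with $g=g_V$. No further argument is needed.
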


\begin{Remark}
In the setting of Cor. \ref{projproj}. The parabolicity of $(\reg(V),g)$ has already been proved in \cite{LT} and in \cite{KIY}. Moreover the stochastic completeness of $(\reg(V),g)$ (which follows from Cor. \ref{projproj}) has already been proved by Li and Tian in \cite{LT} by completely different methods (in fact, by a direct calculation).
\end{Remark}

%
%

We consider now an irreducible affine real algebraic  variety $V\subset \mathbb{R}^m$. For this topic we refer to \cite{BCR}.
We have the following proposition:
\begin{Proposition}
\label{realvariety}
Let $V\subset \mathbb{R}^m$ be a compact and irreducible real affine algebraic variety. Assume that $\dim(\reg(V))-\dim(\sing(V))\geq 2$. Let $U$ be a relatively compact open neighborhood of $V$ in $\mathbb{R}^m$ and let $g$ be a smooth Riemannian metric on $\mathbb{R}^m$ whose restriction on $U$ is quasi isometric to $g_e$, the standard Euclidean metric on $\mathbb{R}^m$. Finally let $i^*_Vg$ be the metric that $g$ induces on $\reg(V)$ through the inclusion $i:\reg(V)\hookrightarrow \mathbb{R}^m$. Then, for each $q\in[1,2]$, $(\reg(V), i^*_{V}g)$ is $q$-parabolic. 
\end{Proposition}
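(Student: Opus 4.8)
The plan is to reduce the statement to a single application of Theorem \ref{straticom}, after recognizing $V$ as a compact smoothly Thom-Mather-stratified pseudomanifold whose regular part, in the induced metric, carries a $\hat c$-iterated edge metric with $\hat c=(1,\dots,1)$. First I would dispose of the ambient metric $g$: since $g|_U\sim g_e|_U$ and $\reg(V)\subset V\subset U$, the induced metrics satisfy $i_V^*g\sim i_V^*g_e$ on $\reg(V)$. Hence, by Corollary \ref{Confdom}(i), it suffices to prove that $(\reg(V),i_V^*g_e)$ is $q$-parabolic for $q\in[1,2]$, so I may assume $g=g_e$. Next I would equip $V$ with its canonical singular-set filtration $V=\Sigma_0\supset\Sigma_1\supset\Sigma_2\supset\cdots$, where $\Sigma_1=\sing(V)$ and $\Sigma_{j+1}=\sing(\Sigma_j)$; each difference $\Sigma_j\setminus\Sigma_{j+1}$ is a smooth manifold and, by irreducibility, $\reg(V)=V\setminus\Sigma_1$ is dense. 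The structural input I would invoke is that such a compact real algebraic variety is a compact smoothly Thom-Mather-stratified pseudomanifold whose induced Riemannian metric on $\reg(V)$ is, near each singular stratum $Y$, quasi-isometric to the edge model $\Id r^2+h_U+r^2 g_{L_Y}$; this rests on the local conic (semialgebraic triviality) structure of real algebraic sets together with the Lipschitz control of the induced metric. In particular $i_V^*g_e$ is then a $\hat c$-iterated edge metric with $\hat c=(1,\dots,1)$. The hypothesis $\dim(\reg(V))-\dim(\sing(V))\geq 2$ guarantees the absence of a codimension-one stratum, as required for the pseudomanifold structure.

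It then remains to check the two compatibility conditions of Theorem \ref{straticom} for $q=2$ and $\hat c=(1,\dots,1)$. Every singular stratum $Y$ has $\cod(Y)\geq 2$: the top singular strata $Y\subset\Sigma_1\setminus\Sigma_2$ satisfy $\dim(Y)\leq\dim(\sing(V))\leq\dim(V)-2$ by hypothesis, while every deeper stratum $Y\subset\Sigma_j\setminus\Sigma_{j+1}$ with $j\geq 2$ satisfies $\dim(Y)\leq\dim(\Sigma_j)\leq\dim(V)-3$, since the singular locus of an algebraic variety strictly lowers dimension. Thus \eqref{condition1} reads $\cod(Y)-1\geq q-1=1$, which holds. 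Moreover, a stratum of codimension exactly $2$ is of maximal dimension among the singular strata, hence can lie in the closure of no singular stratum and therefore has depth $1$; consequently every stratum of depth $>1$ has $\cod(Y)\geq 3$. For such strata \eqref{condition2} reads $\cod(Y)-1-2=\cod(Y)-3\geq 0>-1$, which again holds.

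Applying Theorem \ref{straticom} with $q=2$ then yields that $(\reg(V),i_V^*g_e)$ is $z$-parabolic for every $z\in[1,2]$, and transferring back through the quasi-isometry $i_V^*g\sim i_V^*g_e$ via Corollary \ref{Confdom}(i) gives the assertion for $(\reg(V),i_V^*g)$. I expect the main obstacle to be the structural step: establishing that the induced Euclidean metric is genuinely quasi-isometric, near every singular stratum, to the $\hat c=1$ edge model $\Id r^2+h_U+r^2 g_{L_Y}$. Once this is in hand the dimension bookkeeping is automatic, but the metric comparison is precisely where the real algebraic (semialgebraic, Lipschitz-geometric) input is essential and must be cited or established separately.
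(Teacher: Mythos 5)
Your reduction to the Euclidean case via Corollary \ref{Confdom}(i) and your dimension bookkeeping are fine, but the structural step you yourself flag as the main obstacle is a genuine gap, and in fact the claim you would need is false. The induced Euclidean metric on $\reg(V)$ is in general \emph{not} quasi-isometric near a singular stratum to the $\hat c=(1,\dots,1)$ edge model $\Id r^2+h_U+r^2g_{L_Y}$. The local conic structure theorem for real algebraic sets is only a (semialgebraic) topological statement; metrically, links can collapse faster than linearly. Already for compact real algebraic surfaces in $\IR^3$ with an isolated singularity (e.g.\ a horn $y^2+z^2=x^4$ capped off), the induced metric near the singular point is quasi-isometric to $\Id r^2+r^{2c}h$ with $c>1$; this is exactly the content of Grieser's theorem \cite{DGR}, which the paper invokes for its separate proposition on analytic surfaces. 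So there is no citable fact giving $\hat c=(1,\dots,1)$, and no general theorem currently stated in the paper giving an iterated edge structure with \emph{any} $\hat c$ in arbitrary dimension; the Lipschitz geometry of real algebraic sets is much subtler than the complex or quotient cases. Note also that even if one had such a structure with some $\hat c\neq(1,\dots,1)$, condition \eqref{condition2} would have to be re-examined: for a depth-$>1$ stratum of codimension $2$ and $q=2$ it reads $-c_2>-1$, which \emph{fails} for $c_2\geq 1$, so large exponents are not harmless for the deeper strata.

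The paper takes an entirely different and much shorter route that bypasses the stratified-pseudomanifold machinery: the $2$-parabolicity of $(\reg(V),i_V^*g_e)$ is quoted directly from Li--Tian \cite{LT}, then finite volume of $\reg(V)$ upgrades this to $q$-parabolicity for all $q\in[1,2]$ (via the continuous inclusion $\IL^{2}\Omega^1\hookrightarrow\IL^{q}\Omega^1$, as in the proof of Prop.\ \ref{Open}), and finally Corollary \ref{Confdom}(i) transfers the conclusion from $g_e$ to $g$. To salvage your approach you would have to either restrict to situations where the edge structure of $i_V^*g_e$ is actually known (e.g.\ surfaces, via \cite{DGR}) or supply the missing metric structure theorem; as written, Theorem \ref{straticom} cannot be applied.
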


\begin{proof}
That $(\reg(V), i^*_{V}g_e)$ is $2$-parabolic has been proved by Li and Tian in \cite{LT}. Now, by the fact that $\reg(V)$ has finite volume with respect to $i^*_Vg_e$, we get that $(\reg(V), i^*_{V}g_e)$ is $q$-parabolic for each $q\in [1,2]$. Finally applying Corollary \ref{Confdom} we get that  $(\reg(V), i^*_{V}g)$ is $q$-parabolic, for each $q\in[1,2]$, where $g$ is any Riemannian metric on $\mathbb{R}^m$ quasi isometric to $g_e$ over a relatively compact open neighborhood $U$ of $V$.
\end{proof}

%

\end{document}